\documentclass[11pt, epsfig]{article}
\usepackage{enumerate}
\usepackage{epsfig, amsmath, amssymb, amsthm, times}
\parindent=1.5em
\parskip10pt
\textwidth=6.5in
\topmargin= 0.0in
\oddsidemargin=0in
\textheight=8.4in

\numberwithin{equation}{section}

\def\N{{\mathbb N}}
\def\be{\begin{equation}}
\def\ee{\end{equation}}
\def\lbl{\label}

\def\c{\underline{c}}
\def\b{\underline{b}}
\def\a{\underline{a}}
\def\d{\underline{d}}
\def\x{\underline{x}}
\def\var{\mathbb{V}\mathrm{ar}}
\def\cov{\mathbb{C}\mathrm{ov}}

\def\Q{\mathcal Q}

\newtheorem{thm}{Theorem}[section]

\newtheorem{prop}[thm]{Proposition}

\newtheorem{rem}[thm]{Remark}

\setlength{\parskip}{10pt plus 2pt minus 1pt}
\def\E{{\mathbb E}}
\def\P{{\mathbb P}}
\def\eps{\epsilon}

\def\R{\mathbb{R}}

\title{Asymptotic normality \\ through \\ factorial cumulants and
  partitions identities\thanks{KB, FLB, and JW were  partially supported  by Spanish Ministry of
 Education and Science Grants MTM2007-65909   MTM2010-16949. PH was
 partially supported  by NSA Grant $\#$H98230-09-1-0062 and   Simons
 Foundation grant \#208766. GR and JW were partially supported by NSF
 Grant DMS0840695 and NIH Grant R01CA152158.
} }
\author{\small{Konstancja Bobecka}\thanks{1 Wydzia{\l} Matematyki i Nauk
      Informacyjnych, Politechnika Warszawska, Warszawa, Poland
      e-mail: bobecka@mini.pw.edu.pl}
\hspace{1mm},\hspace{1mm} Pawe{\l} Hitczenko\thanks{2 Department of
  Mathematics, Drexel University, Philadelphia, USA, e-mail:
  phitczenko@math.drexel.edu}
\hspace{1mm},\hspace{1mm} Fernando L\'opez-Bl\'azquez\thanks{Facultad
  de Matem\'aticas Universidad de Sevilla, Sevilla, Spain, e-mail:
  lopez@us.es}
\hspace{1mm},\hspace{1mm} Grzegorz Rempa\l a\thanks{Department of
  Biostatistics, Medical College of Georgia, Augusta, USA, e-mail:
  grempala@mcg.edu}
\hspace{1.5mm},\hspace{1mm} Jacek Weso\l owski\thanks{Wydzia{\l} Matematyki i Nauk Informacyjnych, Politechnika Warszawska, Warszawa, Poland, e-mail: wesolo@mini.pw.edu.pl}}
\date{}
\begin{document}
\maketitle
\begin{abstract}
In the paper we develop an approach to asymptotic normality through
factorial cumulants. Factorial cumulants arise in the same manner from
factorial moments, as do (ordinary) cumulants from (ordinary) moments. Another tool we exploit is a new identity for "moments" of partitions of numbers. The general limiting result is then used to (re--)derive asymptotic normality for several models including classical discrete distributions, occupancy problems in some generalized allocation schemes and two models related to negative multinomial distribution.
\end{abstract}

\section{Introduction}
Convergence to normal law is one of the  most important phenomena of probability. As a consequence, a number of general methods, often based on transforms of the underlying sequence have been developed as techniques for establishing such convergence. One of these methods, called the method of moments,  rests on the fact that the standard normal random variable is uniquely determined by its moments  and  that for  such random variable  $X$ if $(X_n)$ is a sequence of random variables having all moments and $\E\,X_n^k\to\E\,X^k$ for all $k=1,2,\ldots$, then $X_n\stackrel{d}{\to}X$: see, e.g., \cite[Theorem~ 2.22]{v}  or \cite[Theorem~30.2]{b}. Here and throughout the paper we use \lq\lq$\stackrel d\to$\rq\rq\ to denote the convergence in distribution.

Since moments are not always convenient to work with, one can often
use some other characteristics of random variables to establish the
convergence to the normal law. For example,  in one  classical situation we  consider a sequence of  cumulants (we recall the definition in the next section) rather than moments. On the one hand,
since the $k$th cumulant is a continuous function of the first $k$ moments, to prove that $X_n\stackrel{d}{\to}X$ instead of convergence of moments one can use convergence of cumulants.
On the other hand,  all cumulants of the standard normal distribution
are zero except of the cumulant of order 2 which equals 1. This  often
makes it much easier to establish the convergence of cumulants of
$(X_n)$ to the cumulants of the standard normal random variable.  We
refer the reader to e.g. \cite[Section~6.1]{jlr} for a  more detailed discussion.

In this paper we develop  an approach to asymptotic normality that is
based on factorial cumulants.  They will be discussed in the  next
section. Here we just indicate that factorial cumulants arise in the
same manner from factorial moments, as do (ordinary) cumulants from (ordinary) moments.
The motivation for our work is the fact that quite often one encounters  situations in which properties of random variables are naturally expressed through factorial (rather than ordinary) moments. As we will see below, this is the case, for  instance, when random variables under consideration are  sums of indicator variables.

In developing our approach  we  first provide a simple and yet quite
general sufficient condition for the Central Limit  Theorem (CLT) in terms of factorial cumulants (see Proposition~\ref{gen0} below). Further, as we will see in Theorem~\ref{gen}, we show that the validity of this condition can be verified by controlling the asymptotic behavior of factorial moments.
 This limiting result will be  then used in Section~\ref{sec:appl} to
 (re--)derive asymptotic normality for several models including
 classical discrete distributions, occupancy problems in some
 generalized allocation schemes (GAS),  and two models related to a
 negative multinomial distribution; they are examples of what may be
 called  generalized inverse allocation schemes (GIAS). Gas were
 introduced in \cite{k} and we refer the reader to chapters in books
 \cite{k1,k2} by the same author for more details, properties, and
 further references. The term \lq\lq generalized inverse allocation
 schemes\rq\rq\ does not seem to be commonly used in the literature;
 in our terminology the word \lq\lq inverse\rq\rq\ refers to  inverse
 sampling, a sampling method   proposed in \cite{h}. A number of
 distributions with \lq\lq inverse\rq\rq\ in their names reflecting
 the inverse sampling are discussed in the first (\cite{jkk}) and the
 fourth (\cite{jkb}) volumes of the Wiley Series in Probability and Statistics.

 We believe that our approach may turn out to be useful in other
 situations when the factorial moments are natural and convenient
 quantities to work with.  We wish to mention, however,  that although
 several of our summation schemes are, in fact,  GAS or GIAS we do not
 have a general statement that would give  reasonably general
 sufficient conditions under which a GAS or a GIAS exhibits asymptotic
 normality. It may be and interesting question, worth further investigation.

Aside from utilizing factorial cumulants, another technical tool  we
exploit is an identity for \lq\lq moments\rq\rq\  of partitions of
natural numbers (see Proposition~\ref{partid}). As far as we know this identity is new,
and may be of independent interest to the combinatorics community. As of now, however, we do not have any combinatorial interpretation, either for its validity or for its proof.

\section{Factorial cumulants}\label{sec:fac_cum}
Let $X$ be a random variable with the Laplace transform
$$
\phi_X(t)=\E\:e^{tX}=\sum_{k=0}^{\infty}\:\mu_k\frac{t^k}{k!}
$$
and the cumulant transform
$$
\psi_X(t)=\log(\phi_X(t))=\sum_{k=0}^{\infty}\:\kappa_k\frac{t^k}{k!}\:.
$$
Then $\mu_k=\E\,X^k$ and $\kappa_k$ are, respectively, the $k$th moment and the $k$th cumulant of $X$, $k=0,1,\ldots$.

One can view the sequence $\underline{\kappa}=(\kappa_k)$ as obtained by a transformation
$f=(f_k)$ of the sequence $\underline{\mu}=(\mu_k)$, that is,
$\underline{\kappa}=f(\underline{\mu})$, where the $f_k$ are
defined recursively by $f_1(\x)=x_1$ and
\be\label{fid}f_k(\x)=x_k-\sum_{j=1}^{k-1}\:\binom{k-1}{j-1}f_j(\x)x_{k-j},\qquad
k>1. \ee

The Laplace transform can be also expanded in the form
\be\label{famo}
\phi_X(t)=\sum_{k=0}^{\infty}\:\nu_k\frac{\left(e^t-1\right)^k}{k!},
\ee
where $\nu_k$ is the $k$th factorial moment of $X$, that is, 
\[\nu_0=1\quad\mbox{and}\quad
\nu_k=\E\,(X)_k=\E\,X(X-1)\ldots(X-k+1),\quad k=1,\ldots.\] 
Here and below we use the Pochhammer symbol  $(x)_k$ for the falling factorial $x(x-1)\dots(x-(k-1))$.

In analogy to \eqref{fid} one can view the sequence $\underline{\mu}$ as  obtained by a transformation
$g=(g_k)$ of the sequence $\underline{\nu}=(\nu_k)$, that is,
$\underline{\mu}=g(\underline{\nu})$, where the $g_k$ are
defined  by
\be\label{gid}
g_k(\x)=\sum_{j=1}^k\:S_2(k,j)x_j,\qquad k\ge 1,
\ee
where $S_2(k,j)$ are the Stirling numbers of the second kind defined by the identity
\[y^k=\sum_{j=1}^k\:S_2(k,j)(y)_j,\] 
holding for any $y\in\R$ (see e.g. \cite[(6.10)]{gkp}).

Following this probabilistic terminology for any
sequence of real numbers $\a=(a_k)$ one can define its cumulant and factorial
sequences, $\b=(b_k)$ and $\c=(c_k)$, respectively,   by
\be\lbl{exp_form}\sum_{k=0}^{\infty}\,a_k\frac{t^k}{k!}=\exp\left\{\sum_{k=0}^{\infty}\,b_k\frac{t^k}{k!}\right\}=\sum_{k=0}^{\infty}\,c_k\frac{\left(e^t-1\right)^k}{k!}.\ee
The first relation is known in combinatorics by the name \lq
exponential formula\rq,  and its combinatorial interpretation when both $(a_n)$ and $(b_n)$ are non--negative integers may be found, for example, in \cite[Section~5.1]{s_ec}.

Note that if $\a$, $\b$ and $\c$ are related by \eqref{exp_form} then $\b=f(\a)$ and  $\a=g(\c)$, where $f$ and $g$ are given by \eqref{fid} and \eqref{gid}, respectively.

Let the sequence $\d=(d_k)$ be defined by
$$
\exp\left\{\sum_{k=0}^{\infty}\:d_k\frac{\left(e^t-1\right)^k}{k!}\right\}=\sum_{k=0}^{\infty}\:c_k\frac{\left(e^t-1\right)^k}{k!}.
$$
Then,  regarding $e^t-1$ as a new variable we see that
$\d=f(\c)$. Since $\c$ is a factorial sequence for $\a$ and $\d$ is a
cumulant sequence for $\c$,  we call $\d=f(\c)$ the {\em factorial cumulant sequence} for $\a$.

Observe that, by \eqref{exp_form},
$$
\sum_{k=0}^{\infty}\:d_k\frac{\left(e^t-1\right)^k}{k!}=\sum_{k=0}^{\infty}\, b_k\frac{t^k}{k!}
$$
and thus $\b=g(\d)=g(f(\c))$. That is,
\be\label{bexp}
b_k=\sum_{j=1}^k\:S_2(k,j)f_j(\c)\;,\quad k=1,2,\ldots \ee
This observation is useful for proving convergence in law to the standard normal variable.

\begin{prop}\label{gen0}
Let $(S_n)$ be a sequence of random variables having all moments.  Assume that
\be\label{war1}
\var S_n\stackrel{n\to\infty}{\longrightarrow} \infty
\ee
and
\be\label{war2}
\frac{\E\,S_n}{\var^{\frac{3}{2}}S_n}\stackrel{n\to\infty}{\longrightarrow}0\:.
\ee
For any $n=1,2,\ldots$, let $\c_n=(c_{k,n})_{k=1,\ldots}$ be the sequence of factorial moments of $S_n$, that is, $c_{k,n}=\E\,(S_n)_k$, $k=1,2,\ldots$, and let $f_{J,n}=f_J(\c_n)$ (where $f_J$ is defined by \eqref{fid}) be the $J$th factorial cumulant of $S_n$, $J=1,2,\ldots$. Assume that
\be\label{fJc}
\frac{f_{J,n}}{\var^{\frac{J}{2}}S_n}\stackrel{n\to\infty}{\longrightarrow}0\quad \mbox{for }\;J\ge 3.
\ee
Then
\be\label{clt}
U_n=\frac{S_n-\E\,S_n}{\sqrt{\var\,S_n}}\stackrel{d}{\to}{\cal N}(0,1)\;.
\ee
\end{prop}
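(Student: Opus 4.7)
The strategy is to convert the hypotheses on the factorial cumulants $f_{J,n}$ into statements about the ordinary cumulants of the standardized variable $U_n$, and then to invoke the classical method of cumulants. Since the standard normal law is determined by its moments, and since the cumulants are polynomial functions of the moments (and vice versa), to establish \eqref{clt} it is enough to show that every cumulant $\kappa_j(U_n)$ converges to the corresponding cumulant of $\mathcal{N}(0,1)$; that is, $\kappa_1(U_n)\to 0$, $\kappa_2(U_n)\to 1$, and $\kappa_j(U_n)\to 0$ for every $j\ge 3$.

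The first two convergences are built into the normalization of $U_n$. For $j\ge 2$, by the translation invariance and the $j$-homogeneity of cumulants, one has $\kappa_j(U_n)=b_{j,n}/\var^{j/2}S_n$, where $b_{j,n}$ denotes the $j$th ordinary cumulant of $S_n$. Identity \eqref{bexp} then expresses each $b_{j,n}$ as a linear combination of the factorial cumulants $f_{1,n},\dots,f_{j,n}$ through the Stirling numbers of the second kind, yielding
\[
\frac{b_{j,n}}{\var^{j/2}S_n} \;=\; \sum_{i=1}^{j} S_2(j,i)\,\frac{f_{i,n}}{\var^{j/2}S_n}.
\]
The task reduces to showing that every summand on the right tends to zero for $j\ge 3$.

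I would split the argument into three regimes. For $i\ge 3$, I would factor $f_{i,n}/\var^{j/2}S_n=(f_{i,n}/\var^{i/2}S_n)\cdot\var^{(i-j)/2}S_n$; the first factor tends to $0$ by \eqref{fJc}, while the second is bounded by $1$ for large $n$ since $i\le j$ and $\var S_n\to\infty$ by \eqref{war1}. For $i=1$, using $f_{1,n}=\E S_n$, one writes
\[
\frac{\E S_n}{\var^{j/2}S_n} \;=\; \frac{\E S_n}{\var^{3/2}S_n}\,\cdot\,\var^{(3-j)/2}S_n,
\]
which tends to $0$ by \eqref{war2} (together with \eqref{war1} when $j>3$). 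Finally, for $i=2$, a direct application of \eqref{fid} gives $f_{2,n}=\var S_n-\E S_n$, and therefore
\[
\frac{f_{2,n}}{\var^{j/2}S_n} \;=\; \var^{(2-j)/2}S_n \,-\, \frac{\E S_n}{\var^{j/2}S_n},
\]
with both summands converging to $0$ by \eqref{war1} and the estimate just obtained for $i=1$.

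There is no serious obstacle in the argument; essentially the only point requiring attention is that hypothesis \eqref{fJc} controls $f_{i,n}$ only for $i\ge 3$, so that the boundary cases $i=1$ and $i=2$ must be treated separately by appealing to \eqref{war1} and \eqref{war2}. Once all cumulants of $U_n$ are shown to converge to those of $\mathcal{N}(0,1)$, the method of cumulants recalled in the introduction immediately delivers the distributional convergence \eqref{clt}.
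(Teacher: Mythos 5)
Your proposal is correct and follows essentially the same route as the paper: both reduce the claim via the cumulant convergence theorem to showing $\kappa_j(U_n)\to 0$ for $j\ge 3$, expand the ordinary cumulants of $S_n$ in terms of the factorial cumulants through \eqref{bexp}, and then treat the terms $i=1,2$ via \eqref{war1}--\eqref{war2} (using $f_{1,n}=\E S_n$, $f_{2,n}=\var S_n-\E S_n$) and the terms $i\ge 3$ via \eqref{fJc} together with \eqref{war1}. The only cosmetic difference is that you fold the cases $i=j$ and $3\le i<j$ into a single estimate, which the paper states separately.
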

\begin{proof}
We will use the cumulant convergence theorem (see e.g. \cite[Theorem~6.14]{jlr}).
 Let $\kappa_{J,n}$ denote the $J$th cumulant of $U_n$ and recall  that all cumulants of the standard normal distribution are zero except for the cumulant of order 2, which is 1. It is obvious that $\kappa_{1,n}=0$ and $\kappa_{2,n}=1$.  Therefore, to prove \eqref{clt} it suffices to show that $\kappa_{J,n}\to 0$ for  $J\ge 3$.
 By \eqref{bexp}, 
$$
\kappa_{J,n}=\frac{\sum_{j=1}^J\:S_2(J,j)f_{j,n}}{\var^{\frac{J}{2}}S_n}.
$$
Fix arbitrary $J\ge 3$. To prove that $\kappa_{J,n}\to 0$ it suffices to show that
\be\label{fjn}
\frac{f_{j,n}}{\var^{\frac{J}{2}}S_n}\to 0\quad\mbox{for all}\;j=1,2,\ldots,J.
\ee
Note first that by \eqref{fid}
$$
f_{1,n}=\E\,S_n\quad\mbox{and}\quad f_{2,n}=\var(S_n)-\E\,S_n
$$
Therefore the assumptions \eqref{war1} and \eqref{war2} imply \eqref{fjn} for $j=1,2$.

If $j\in\{3,\ldots,J-1\}$, we write
$$
\frac{f_{j,n}}{\var^{\frac{J}{2}}S_n}=\frac{f_{j,n}}{\var^{\frac{j}{2}}S_n}\:\frac{1}{\var^{\frac{J-j}{2}}S_n}.
$$
By \eqref{fJc} the first factor tends to zero and by \eqref{war1} the second factor tends to zero as well.

Finally, for $j=J$ the conditions \eqref{fjn} and \eqref{fJc} are identical.
\end{proof}

The above result is particularly useful when the factorial moments of $S_n$ are available in a nice form. We will now describe  a general situation when this happens.

For any numbers $\delta_j$, $j=1,\ldots,N$, assuming
values $0$ or $1$ we have
$$x^{\sum_{j=1}^N\:\delta_j}=\sum_{m=0}^{\sum_{j=1}^N\:\delta_j}\binom{\sum_{j=1}^N\:\delta_j}{m}(x-1)^m
=1+\sum_{m=1}^N(x-1)^m\sum_{1\le j_1<\ldots<j_m\le N}\:\delta_{j_1}\ldots\delta_{j_m}\;.$$
Therefore, if $(\eps_1,\ldots,\eps_N)$ is a random vector valued in
$\{0,1\}^N$ and $S=\sum_{i=1}^N\;\eps_i$ then
$$
\E\,e^{tS}=1+\sum_{m=1}^{\infty}\:\left(e^{t}-1\right)^m\sum_{1\le
j_1<\ldots<j_m\le
N}\:\P(\eps_{j_1}=\eps_{j_2}=\ldots=\eps_{j_m}=1)\;.$$
Comparing this formula with \eqref{famo},  we conclude that factorial moments of $S$ have the form
\be\label{ce} \E(S)_k=k!\sum_{1\le
j_1<\ldots<j_k\le
N}\:\P(\eps_{j_1}=\eps_{j_2}=\ldots=\eps_{j_k}=1)=:c_k\;,\quad
k=1,2,\ldots. \ee
If, in addition, the random variables $(\eps_1,\ldots,\eps_N)$ are exchangeable,
then the above formula simplifies to
\be\label{ce_simp}
\E(S)_k=(N)_k\:\P(\eps_{1}=\ldots=\eps_k=1)=:c_k\;,\quad
k=1,2,\ldots. \ee

As we will see in Section~\ref{sec:appl}, our sufficient condition for asymptotic normality will work well for several set--ups falling within such a scheme.  This will be preceded by a derivation of new identities for integer
partitions,  which will give a major enhancement of the tools we will use to prove limit theorems.

\section{Partition identities}\label{sec:part_id}
Recall  that if $\b=(b_n)$ is a cumulant
sequence for a sequence of numbers $\a=(a_n)$, that is,  $\b=f(\a)$ with $f$ given by \eqref{fid}, then  for  $J\ge1$
\be\lbl{cum_via_mom}
b_J=\sum_{\pi\subset J}\:D_{\pi} \prod_{i=1}^J\:a_i^{m_i},\quad \mbox{
where}\quad
D_{\pi}=\frac{(-1)^{\sum\limits_{i=1}^Jm_i-1}J!}{\prod\limits_{i=1}^J\:(i!)^{m_i}\sum\limits_{i=1}^J\:m_i}
\binom{\sum\limits_{i=1}^J\:m_i}{m_1,\ldots,m_J}\;,
\ee
and where the sum is  over all partitions $\pi$ of a positive integer
$J$, i.e.,  over all
vectors $\pi=(m_1,\ldots,m_J)$ with non--negative integer components
which satisfy $\sum_{i=1}^Jim_i=J$ (for a proof, see,  e.g.,  \cite[Section~3.14]{ks}).

Note that for $J\ge 2$
\be\label{for2}
\sum_{\pi\subset
J}\:D_{\pi}=0\;.
\ee
This follows from the fact that all the cumulants,  except for the first
one, of the  constant random variable $X=1$ a.s. are zero.

For $\pi=(m_1,\ldots,m_J)$ we denote
$H_{\pi}(s)=\sum_{i=1}^J\:i^sm_i$, $s=0,1,2\ldots$.
The main result of this section is the identity which extends considerably \eqref{for2}.

\begin{prop}\label{partid}
Assume  $J\ge 2$. Let $I\ge 1$ and $s_i\ge 1$, $i=1,\ldots,I$,
be such that
$$
\sum_{i=1}^I\:s_i\le J+I-2.
$$
Then \be\label{for7} \sum_{\pi\subset J}\:D_{\pi}
\prod_{i=1}^I\:H_{\pi}(s_i)=0. \ee
\end{prop}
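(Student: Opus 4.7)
The plan is to recast the sum over integer partitions of $J$ as a sum over set partitions of $[J]$, interpret the factors $H_\pi(s_i)$ combinatorially through ordered tuples of distinct elements, and reduce the resulting identity to \eqref{for2} via a coarsening argument.

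The first step uses the standard identification $D_\pi=(-1)^{|\sigma|-1}(|\sigma|-1)!\,N(\pi)$, where $N(\pi)=J!/(\prod_i (i!)^{m_i}\prod_i m_i!)$ is the number of set partitions of $[J]$ of type $\pi$ and $|\sigma|=\sum_i m_i$. Since $H_\pi(s)=\sum_{B\in\sigma}|B|^s$ depends only on the block--size type,
$$\sum_{\pi\subset J}D_\pi\prod_{i=1}^I H_\pi(s_i)=\sum_{\sigma\vdash[J]}(-1)^{|\sigma|-1}(|\sigma|-1)!\prod_{i=1}^I\sum_{B\in\sigma}|B|^{s_i}.$$
Next, I would expand $|B|^{s_i}=\sum_{j_i}S_2(s_i,j_i)(|B|)_{j_i}$ and interpret $(|B|)_{j_i}$ as the number of ordered $j_i$--tuples of distinct elements of $B$. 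Distributing the product over $i$ and swapping orders of summation converts the right--hand side above into a sum over multi--indices $(j_1,\ldots,j_I)$ and over $I$--tuples $(\vec x_1,\ldots,\vec x_I)$, with each $\vec x_i$ ranging over ordered $j_i$--tuples of distinct elements of $[J]$, weighted by $\prod_i S_2(s_i,j_i)$ and by the indicator that every $\vec x_i$ is monochromatic in $\sigma$.

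For fixed $(\vec x_i)$, put $T_i:=\{x_{i,1},\ldots,x_{i,j_i}\}$ and let $\tau(\vec x)$ be the finest set partition of $[J]$ making every $T_i$ monochromatic: its blocks are the connected components of the hypergraph on $[J]$ with hyperedges $T_1,\ldots,T_I$, together with a singleton for each uncovered vertex. Partitions $\sigma$ coarser than $\tau(\vec x)$ correspond bijectively to set partitions $\rho$ of the $r:=|\tau(\vec x)|$ blocks of $\tau(\vec x)$, with $|\sigma|=|\rho|$. The inner $\sigma$--sum therefore equals $\sum_{\rho\vdash[r]}(-1)^{|\rho|-1}(|\rho|-1)!$, which by \eqref{for2} applied with $J$ replaced by $r$ vanishes unless $r=1$.

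Only those $(\vec x_i)$ with $\tau(\vec x)=\{[J]\}$ survive; equivalently, the bipartite incidence graph on $[J]\sqcup[I]$ with edges $\{v,i\}$ for $v\in T_i$ must be connected. This graph has $J+I$ vertices and $\sum_i|T_i|=\sum_i j_i$ edges (since the entries of each $\vec x_i$ are distinct), so connectedness forces $\sum_i j_i\ge J+I-1$. But $S_2(s_i,j_i)=0$ for $j_i>s_i$, and under the hypothesis of the proposition $\sum_i j_i\le\sum_i s_i\le J+I-2$, a contradiction. Hence the sum is empty and \eqref{for7} follows. The main obstacle, in my view, is finding the right combinatorial dictionary: once one sees that the substitution $|B|^{s_i}\to\sum_{j_i}S_2(s_i,j_i)(|B|)_{j_i}$ produces the hyperedges $T_i$ whose joint connectivity is governed by the $(J+I-1)$--edge bound for connected bipartite graphs, the reduction to \eqref{for2} and the final inequality are mechanical.
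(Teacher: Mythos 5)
Your proof is correct, and it is genuinely different from the one in the paper. The paper proceeds by induction on $J$: it constructs explicit bijections $g_i$ between partitions of $K$ with $m_i\ge 1$ and partitions of $K-1$, tracks how $D_\pi$ and $H_\pi$ transform under these maps, and reduces the sum to lower-order instances of the same identity. You instead pass from integer partitions to set partitions of $\{1,\dots,J\}$ via $D_\pi=(-1)^{|\sigma|-1}(|\sigma|-1)!\,N(\pi)$ (which is exactly right, since $(-1)^{k-1}(k-1)!$ is the M\"obius function of the partition lattice), expand $|B|^{s_i}$ in falling factorials to introduce the tuples $\vec x_i$, collapse the $\sigma$-sum over the interval above $\tau(\vec x)$ using \eqref{for2}, and kill the surviving connected configurations by the edge-count bound $\sum_i j_i\le\sum_i s_i\le J+I-2<J+I-1$ for a connected bipartite graph on $J+I$ vertices. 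Each step checks out: the $S_2(s_i,j_i)$ weights vanish for $j_i>s_i$, every $T_i$ is nonempty, and $\tau(\vec x)=\{[J]\}$ forces every vertex to be covered, so the incidence graph must be connected. What your argument buys is substantial: the authors state explicitly that they have no combinatorial interpretation of the identity or its proof, and your argument supplies one (closely in the spirit of the Stanley remark following the proposition, whose feasibility the authors say is unclear to them). It also makes transparent why the bound $J+I-2$ is sharp --- at $\sum s_i=J+I-1$ the connectivity obstruction disappears and spanning-tree configurations survive --- whereas the paper only asserts this without proof. The paper's induction, by contrast, is elementary and self-contained but computationally opaque.
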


\begin{proof}
We use induction with respect to $J$. Note that if $J=2$ than $I$
may be arbitrary but $s_i=1$ for all $i=1,\ldots,I$. Thus the identity
\eqref{for7} follows from \eqref{for2} since $H_{\pi}(1)=J$ for
any $J$ and any $\pi\subset J$.

Now  assume that the result holds true for
$J=2,\ldots,K-1$, and consider the case of $J=K$. That is, we want
to study
$$
\sum_{\pi\subset K}\:D_{\pi}\prod_{i=1}^I\:H_{\pi}(s_i)
$$
under the condition $\sum_{i=1}^I\:s_i\le K+I-2$.

Let us introduce functions $g_i$, $i=1,\ldots,K$, by letting
$$
g_i(m_1,\ldots,m_K)=(\tilde{m}_1,\ldots,\tilde{m}_{K-1})=\left\{\begin{array}{ll}
(m_1,\ldots,m_{i-1}+1,m_i-1,\ldots,m_{K-1}) & \mbox{if }\;i\ne
1,K, \\
(m_1-1,m_2,\ldots,m_{K-1}) & \mbox{if }\:i=1, \\
(m_1,\ldots,m_{K-2},m_{K-1}+1) & \mbox{if }\;i=K\;.
\end{array}\right.
$$
Note that 
\[g_i:\{\pi\subset K:\:m_i\ge 1\}\to\{\tilde{\pi}\subset
(K-1):\:\tilde{m}_{i-1}\ge 1\},\quad i=1,\ldots,K,\]
 are bijections.
Here for consistency we assume $\tilde{m}_0=1$.

Observe that for any $s$, any $\pi\subset K$ such that $m_i\ge 1$,
and for $\tilde{\pi}=g_i(\pi)\subset (K-1)$ we have
$$
H_{\tilde{\pi}}(s)=H_{\pi}(s)-i^s+(i-1)^s=H_{\pi}(s)-1-A_s(i-1)\;,
$$
where 
\[A_s(i-1)=\sum_{k=1}^{s-1}\binom{s}{k}(i-1)^k\] 
is a
polynomial of the degree $s-1$ in the variable $i-1$ with
constant term equal to zero. In particular, for $i=1$ we have
$H_{\tilde{\pi}}(s)=H_{\pi}(s)-1$.
Therefore, expanding $H_{\pi}(s_1)$ we obtain
$$
\sum_{\pi\subset
K}\:D_{\pi}\prod_{i=1}^I\:H_{\pi}(s_i)=\sum_{i=1}^K\:i^{s_1}\sum_{\substack{\pi\subset
K \\ m_i\ge
1}}\:m_iD_{\pi}\prod_{j=2}^I\:\left[H_{\tilde{\pi}}(s_j)+1+A_{s_j}(i-1)\right].
$$
Note that if $m_i\ge 1$ then
$$
\frac{1}{K!}i
m_iD_{\pi}=\left\{\begin{array}{ll}\frac{(-1)^{M_{\pi}-1}M_{\pi}!}
{M_{\pi}(m_1-1)!\prod\limits_{k=2}^{K-1}m_k!(k!)^{m_k}}=-\frac{1}{(K-1)!}\sum_{j=1}^{K-1}\:D_{\tilde{\pi}}\tilde{m}_j\quad
& \mbox{if }\;i=1, \\
\; & \; \\ \frac{(-1)^{M_{\pi}-1}M_{\pi}!}
{M_{\pi}(m_i-1)!(i-1)!(i!)^{m_i-1}\prod\limits_{\substack{k=2 \\
k\ne
i}}^Km_k!(k!)^{m_k}}=\frac{1}{(K-1)!}D_{\tilde{\pi}}\tilde{m}_{i-1}\quad
& \mbox{if }\;i=2,\ldots,K,
\end{array}\right.
$$
where $\tilde{\pi}=(\tilde{m}_1,\ldots,\tilde{m}_{K-1})=g_i(\pi)$,
respectively.
Therefore, \be\label{rozbicie} \frac{1}{K}\sum_{\pi\subset
K}\:D_{\pi}\prod_{i=1}^I\:H_{\pi}(s_i)=
-\sum_{i=1}^{K-1}\sum_{\tilde{\pi}\subset(K-1)}\:D_{\tilde{\pi}}\tilde{m}_i\prod_{j=2}^I\:\left[H_{\tilde{\pi}}(s_j)+1\right]
\ee $$+\sum_{i=2}^K\sum_{\substack{\tilde{\pi}\subset(K-1)\\
\tilde{m}_{i-1}\ge
1}}\:D_{\tilde{\pi}}\tilde{m}_{i-1}i^{s_1-1}\prod_{j=2}^I\:\left[H_{\tilde{\pi}}(s_j)+1+A_{s_j}(i-1)\right].
$$
The second term in the above expression can be written as
\be\label{drugi}
\sum_{i=1}^{K-1}\sum_{\tilde{\pi}\subset(K-1)}\:D_{\tilde{\pi}}\tilde{m}_i(i+1)^{s_1-1}
\prod_{j=2}^I\:\left[H_{\tilde{\pi}}(s_j)+1+A_{s_j}(i)\right]\;.
\ee
Note that
$$
(i+1)^{s_1-1}
\prod_{j=2}^I\:\left[H_{\tilde{\pi}}(s_j)+1+A_{s_j}(i)\right]$$$$=\sum_{r=0}^{s_1-1}\binom{s_1-1}{r}
\sum_{M=0}^{I-1}\sum_{2\le u_1<\ldots<u_M\le
I}\:i^r\prod_{h=1}^M\:(H_{\tilde{\pi}}(s_{u_h})+1)\prod_{\substack{2\le j\le I\\
j\not\in\{u_1,\ldots,u_M\}}}A_{s_j}(i)\;.
$$
The term with $r=0$ and $M=I-1$ in the above expression is
$\prod_{j=2}^I\:\left[H_{\tilde{\pi}}(s_j)+1\right]$, so this
term of the sum \eqref{drugi} cancels with the first term of
\eqref{rozbicie}.

Hence, we only need to show that for any
$r\in\{1,\ldots,s_1-1\}$, any $M\in\{0,\ldots,I-1\}$, and any $2\le
u_1<\ldots<u_M\le I$
\be\label{ssuma}\sum_{\tilde{\pi}\subset(K-1)}D_{\tilde{\pi}}\prod_{h=1}^M\:(H_{\tilde{\pi}}(s_{u_h})+1)
\sum_{i=1}^{K-1}\tilde{m}_ii^r\prod_{\substack{2\le j\le I\\
j\not\in\{u_1,\ldots,u_M\}}}A_{s_j}(i)=0. \ee  Observe
that the
expression $$\sum_{i=1}^{K-1}\tilde{m}_ii^r\prod_{\substack{2\le j\le I\\
j\not\in\{u_1,\ldots,u_M\}}}A_{s_j}(i)$$ is a linear combination of
$H_{\tilde{\pi}}$ functions with the largest value of an argument equal to  $$\sum_{\substack{2\le j\le I\\
j\not\in\{u_1,\ldots,u_M\}}}(s_j-1)+r\;.$$
Therefore the left-hand side of \eqref{ssuma} is a respective linear combination of
terms of the form \be\label{term}
\sum_{\tilde{\pi}\subset(K-1)}D_{\tilde{\pi}}\prod_{w=1}^W\:H_{\tilde{\pi}}(t_w),
\ee
where $$\sum_{w=1}^Wt_w\le \sum_{j=1}^Ms_{u_j}-(M-W+1)+\sum_{\substack{2\le j\le I\\
j\not\in\{u_1,\ldots,u_M\}}}(s_j-1)+s_1-1\le
\sum_{i=1}^I\:s_i-(M-W+1)-(I-1-M)-1\;.$$ But we assumed that
$\sum_{i=1}^Is_i\le K+I-2$. Therefore $$\sum_{w=1}^Wt_w\le
K+I-2-(M-W+1)-(I-1-M)-1=(K-1)+W-2\;.$$ Now, by the inductive
assumption it follows that any term of the form \eqref{term} is
zero, thus proving \eqref{ssuma}.
\end{proof}

Note that in a similar way one can prove that \eqref{for7} is no
longer true when
$$
\sum_{i=1}^I\:s_i= J+I-1\;.
$$

\textbf{Remark}
Richard Stanley \cite{s}  provided the following  combinatorial
 description of the left--hand side of \eqref{for7}. Put  \[F_n(x) = \sum_k S_2(n,k)x^k,
\] and let $(s_i)$ be a sequence of positive integers. Then the left--hand side of \eqref{for7} is a coefficient of $x^J/J!$   in
\[
  \sum_{\cal P} (-1)^{|{\cal P}|-1} (|{\cal P}|-1)! \prod_{B\in{\cal P}} F_{\sigma B}(x),
\]
where the sum ranges over all partitions ${\cal P}$ of a set  
$\{1,...,I\}$  into $|{\cal P}|$  non--empty pairwise disjoint subsets, and where for any such subset $B\in{\cal P}$
\[
  \sigma B = \sum_{i\in B} s_i.
\]
In the simplest case when $I=1$   for any postive integer $s_1$, the left-hand side of equation
 \eqref{for7}  is equal to $J!S_2(s_1,J)$. Since this is the number of surjective maps
 from an $s_1$-element set to a $J$-element set, it must be 0 for
 $s_1<J$,  which is exactly what Proposition~\ref{partid} asserts.
 It is not clear to us how easy it would be to show that \eqref{for7} holds for the larger values of $I$.

\section{Central Limit Theorem: general set--up}
\label{sec:gen_set}
To illustrate and motivate how our approach is intended to work,
consider a sequence $(S_n)$ of Poisson random variables, where
$S_n\sim {\cal P}oisson(\lambda_n)$. As is well known,  if  $\lambda_n\to\infty$ then $U_n=\frac{S_n-\E\:S_n}{\sqrt{\var S_n}}$  converges in distribution to ${\cal N}(0,1)$. To see how it follows from our approach, note that  $\E\,S_n=\var S_n=\lambda_n$ and therefore the assumptions \eqref{war1} and \eqref{war2} of Proposition \ref{gen0} are trivially satisfied.
Moreover,  the factorial moments of $S_n$ are
$c_{i,n}=\lambda_n^i$. Consequently,
$\prod_{i=1}^J\:c_{i,n}^{m_i}=\lambda_n^J$ for any partition
$\pi=(m_1,\ldots,m_J)$ of $J\in \N$,  and thus
$$
f_{J,n}=f_J(\c_n)=\sum_{\pi\subset J}\:D_{\pi}\prod_{i=1}^J\:c_{i,n}^{m_i}=\lambda_n^J\sum_{\pi\subset J}\:D_{\pi}\,.
$$
It now follows from the simplest case \eqref{for2}  of our partition identity  that $f_{J,n}=0$ as long as $J\ge 2$. Hence the assumption \eqref{fJc} of Proposition \ref{gen0} is also trivially satisfied and we conclude the asymptotic normality of  $(U_n)$.

The key in the above argument was, of course,  the very simple form of
the factorial moments $c_{i,n}$ of $S_n$,   which resulted in
factorization of the products of $c_{i,n}^{m_i}$ in the expression for
$f_{J,n}$.   It is not unreasonable, however, to expect that if the
expression for moments does not depart too much from the form it took
for the Poisson variable, then with the full strength of
Proposition~\ref{partid} one might be able to prove the CLT. This is
the essence of condition~\eqref{ln} in the next result. This
condition,  when combined with the  extension of \eqref{for2} given in
Proposition~\ref{partid}, allows us to greatly refine  Proposition \ref{gen0} towards a possible use in schemes of summation of indicators, as  was suggested in the final part of Section~\ref{sec:fac_cum}.

\begin{thm}\label{gen}
Let $(S_n)$ be a sequence of random variables with factorial moments
$c_{i,n}$, $i,n=1,2,\ldots$. Assume that \eqref{war1} and \eqref{war2}
are satisfied and suppose that $c_{i,n}$ can be decomposed into the form
\be\label{ln}
c_{i,n}=L_n^i\exp\left(\sum_{j\ge 1}
\,\frac{Q_{j+1}^{(n)}(i)}{jn^j}\right),\quad i,n=1,2,\ldots,
\ee
where $(L_n)$ is a sequence of real numbers and $Q_j^{(n)}$ is a polynomial of degree at most $j$  such that
\be\label{polbound}
|Q_j^{(n)}(i)|\le (Ci)^j\quad\mbox{for all  }i\in\N,
\ee
with $C>0$ a constant not depending on $n$ or $j$.
Assume further that  for all $J\ge 3$
\be\label{ln0}
\frac{L_n^J}{n^{J-1}\var^{\frac{J}{2}}S_n}\stackrel{n\to\infty}{\longrightarrow}0.
\ee
Then
\be\label{gencon}
U_n=\frac{S_n-\E\:S_n}{\sqrt{\var S_n}}\stackrel{d}{\to}{\cal N}(0,1),\quad\mbox{as}\quad  n\to\infty.
\ee
\end{thm}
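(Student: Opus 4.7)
The strategy is to apply Proposition~\ref{gen0}: since \eqref{war1} and \eqref{war2} are already among the assumptions, the only remaining task is to verify \eqref{fJc}, and by \eqref{ln0} it suffices to produce, for every fixed $J\ge 3$, a bound of the form $f_{J,n}=O(L_n^J/n^{J-1})$ as $n\to\infty$.

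Starting from the representation \eqref{cum_via_mom} and substituting \eqref{ln}, the identity $\sum_i im_i=J$ gives
\[
\prod_{i=1}^J c_{i,n}^{m_i}=L_n^J\exp\Bigl(\sum_{j\ge 1}\frac{R_j(\pi)}{jn^j}\Bigr),\qquad R_j(\pi):=\sum_{i=1}^J m_i\,Q_{j+1}^{(n)}(i).
\]
The bound \eqref{polbound} evaluated at $i=0$ forces $Q_{j+1}^{(n)}(0)=0$, so $R_j(\pi)$ is a linear combination, with coefficients uniformly bounded by something of order $C^{j+1}$, of the quantities $H_\pi(s)$ with $s\in\{1,\ldots,j+1\}$. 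Expanding the exponential as
\[
\exp\Bigl(\sum_{j\ge 1}\frac{R_j(\pi)}{jn^j}\Bigr)=\sum_{N\ge 0}\frac{A_N(\pi)}{n^N},
\]
where $A_N(\pi)$ gathers, over all $M\ge 0$ and tuples $(j_1,\ldots,j_M)$ of positive integers with $\sum_\ell j_\ell=N$, the contribution $(M!)^{-1}\prod_\ell R_{j_\ell}(\pi)/j_\ell$, one sees that each $A_N(\pi)$ is a linear combination of products $\prod_{\ell=1}^M H_\pi(s_\ell)$ with $1\le s_\ell\le j_\ell+1$, and in particular $\sum_\ell s_\ell\le N+M$.

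The key cancellation step is then Proposition~\ref{partid} with $I=M$: the sum $\sum_\pi D_\pi\prod_\ell H_\pi(s_\ell)$ vanishes as soon as $\sum_\ell s_\ell\le J+M-2$. Since $\sum_\ell s_\ell\le N+M$ in our expansion, this applies whenever $N\le J-2$, the borderline case $M=N=0$ being covered by \eqref{for2}. It follows that $\sum_\pi D_\pi A_N(\pi)=0$ for $N=0,1,\ldots,J-2$, so the first $J-1$ terms in the $1/n$-expansion of $f_{J,n}/L_n^J$ vanish identically.

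The remaining tail $N\ge J-1$ is handled by crude estimates: \eqref{polbound} together with $H_\pi(s)\le J^s$ (which holds for $s\ge 1$ because $\sum_i im_i=J$ forces $i\le J$ whenever $m_i\ge 1$) gives $|R_j(\pi)|\le (CJ)^{j+1}$, from which the exponential series produces $|A_N(\pi)|\le K_J(2CJ)^N$ for a constant $K_J$ depending only on $J$ and $C$. Summing over the finitely many partitions $\pi\subset J$ and over $N\ge J-1$ then yields $|f_{J,n}|\le \tilde K_J L_n^J/n^{J-1}$ for all $n$ large enough, which combined with \eqref{ln0} establishes \eqref{fJc}. The main obstacle is the combinatorial cancellation described in the previous paragraph: the exact matching between the exponent of $1/n$, the number of exponential factors, and the degrees of the polynomials $Q_{j+1}^{(n)}$ is precisely what the partition identity \eqref{for7} supplies, and without its strengthening of \eqref{for2} beyond $I=1$ only the trivial $N=0$ term would vanish, leaving a decay too slow to conclude.
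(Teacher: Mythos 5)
Your proof is correct and follows essentially the same route as the paper's: apply Proposition~\ref{gen0}, factor out $L_n^J$ from \eqref{cum_via_mom} using $\sum_i im_i=J$, expand the exponential in powers of $1/n$, annihilate every coefficient of $n^{-N}$ with $N\le J-2$ via Proposition~\ref{partid} (your degree count $\sum_\ell s_\ell\le N+M\le J+M-2$ is exactly the paper's), and bound the tail $N\ge J-1$ by crude estimates of the same type. Your explicit observation that $Q_{j+1}^{(n)}(0)=0$, so that only $H_\pi(s)$ with $s\ge1$ enter and Proposition~\ref{partid} applies as stated, is a small point of care that the paper leaves implicit, not a different method.
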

\begin{proof} Due to Proposition \ref{gen0} we only need  to show that \eqref{fJc} holds.
The representation \eqref{ln} implies
$$f_{J,n}=\sum_{\pi\subset J}\:D_{\pi}\prod_{i=1}^J\:c_{i,n}^{m_i}=L_n^J\sum_{\pi\subset J}\:D_{\pi}e^{z_{\pi}(J,n)},
$$
where
$$
z_{\pi}(J,n)=\sum_{j\ge 1}\,\frac{A_{\pi}^{(n)}(j)}{jn^j}\quad \mbox{with}\quad A_{\pi}^{(n)}(j)=\sum_{i=1}^J\,m_i\,Q_{j+1}^{(n)}(i).
$$
Fix arbitrary $J\ge 3$. To prove \eqref{fJc}, in view of \eqref{ln0} it suffices to show that $\sum_{\pi\subset J}\:D_{\pi}e^{z_{\pi}(J,n)}$ is of order $n^{-(J-1)}$.
To do that, we expand $e^{z_\pi(J,n)}$  into power series to obtain
$$
\sum_{\pi\subset J}\:D_{\pi}e^{z_{\pi}(J,n)}=\sum_{\pi\subset J}\:D_{\pi}e^{\sum_{j\ge 1}\:\frac{1}{jn^j}A^{(n)}_{\pi}(j)}=
\sum_{\pi\subset J}\:D_{\pi}\sum_{s=0}^{\infty}\:\frac{1}{s!}\left(\sum_{j\ge 1}\:\frac{1}{jn^j}A^{(n)}_{\pi}(j)\right)^s
$$$$=\sum_{s\ge 1}\:\frac{1}{s!}\sum_{l\ge s}\:\frac{1}{n^l}\sum_{\substack{j_1,\ldots,j_s\ge 1 \\ \sum_{k=1}^s\:j_k=l}}
\frac{1}{\prod_{k=1}^s\:j_k}\sum_{\pi\subset J}\:D_{\pi}\prod_{k=1}^s\:A^{(n)}_{\pi}(j_k)\;.
$$
We claim that whenever $\sum_{k=1}^s\:j_k\le J-2$ then
\be\label{zero}
\sum_{\pi\subset J}\:D_{\pi}\prod_{k=1}^s\:A^{(n)}_{\pi}(j_k)=0.
\ee
To see this, note that  by changing the order of summation in the expression for $A_{\pi}^{(n)}(j)$ we can write it as
$$
A_{\pi}^{(n)}(j)=\sum_{k=0}^{j+1}\,\alpha_{k,j+1}^{(n)}\,H_{\pi}(k),
$$
where $(\alpha_{k,j}^{(n)})$ are the coefficients of the polynomial
$Q_j^{(n)}$, that is, 
\[Q_j^{(n)}(x)=\sum_{k=0}^j\,\alpha_{k,j}^{(n)}\,x^k.\]
Consequently, \eqref{zero} follows from identity \eqref{for7}.

To handle the terms for which $\sum_{k=1}^s\:j_k> J-2$ note that
$$|A_{\pi}^{(n)}(j)|\le (CJ)^{j+1}\sum_{i=1}^J\,m_i<K\,(CJ)^j,
$$
where $K>0$ is a constant depending only on $J$ (and not on the partition $\pi$).
Hence,
\be\label{jeden}
\left|\sum_{\pi\subset J}\:D_{\pi}\prod_{k=1}^s\:A^{(n)}_{\pi}(j_k)\right|\le \sum_{\pi\subset J}\:\left|D_{\pi}\right|\prod_{k=1}^s\:K\,(CJ)^{j_k}
\le \widetilde{C} K^s (CJ)^{\sum_{k=1}^s\:j_k}\;,
\ee
where $\widetilde{C}=\sum_{\pi\subset J}\,|D_{\pi}|$ is a constant depending only on $J$.
Therefore, restricting the sum according to \eqref{zero} and using \eqref{jeden}, we get
$$
\left|\sum_{\pi\subset J}\:D_{\pi}e^{z_{\pi}(J,n)}\right|\le
\sum_{s\ge 1}\:\frac{1}{s!}\sum_{l\ge \max\{s,J-1\}}\frac{1}{n^l}\sum_{\substack{j_1,\ldots,j_s\ge 1 \\ \sum_{k=1}^s\:j_k=l}}\:\frac{1}{\prod_{k=1}^s\:j_k}\left|\sum_{\pi\subset J}\:D_{\pi}\prod_{k=1}^s\:A^{(n)}_{\pi}(j_k)\right|
$$$$\le \widetilde{C}\sum_{s\ge 1}\:\frac{K^s}{s!}\sum_{l\ge J-1}\:\frac{1}{n^l}\,l^s(CJ)^l\;.
$$
Here we used  the inequality
$$
\sum_{\substack{j_1,\ldots,j_s\ge 1 \\\sum_{k=1}^s\:j_k=l}}\:\frac{1}{\prod_{k=1}^s\:j_k}<l^s\;,
$$
which may be seen by trivially bounding the sum by the number of its
terms. Now we change the order of summations,  arriving at
$$
\left|\sum_{\pi\subset J}\:D_{\pi}e^{z_{\pi}(J,n)}\right|\le
\widetilde{C}\sum_{l\ge J-1}\:\left(\frac{CJ}{n}\right)^l\sum_{s\ge 1}\:\frac{(lK)^s}{s!}
\le \widetilde{C}\sum_{l\ge J-1}\:\left(\frac{CJe^K}{n}\right)^l=\widetilde{C}\left(\frac{CJe^K}{n}\right)^{J-1}\sum_{l\ge 0}\:\left(\frac{CJe^K}{n}\right)^l\;.
$$
The result follows,  since for $n$ sufficiently large (such that $CJe^K<n$)  the series in the last expression converges.
\end{proof}

\begin{rem}\label{orden} A typical way Theorem~\ref{gen} will be  applied is as follows.
Assume that $\E\,S_n$ and $\var\,S_n$ are  of the same order $n$. Then obviously, \eqref{war1} and \eqref{war2} are satisfied. Assume also that \eqref{ln} and \eqref{polbound} hold and that $L_n$ is also of  order $n$.  Then clearly \eqref{ln0} is satisfied and thus \eqref{gencon} holds true.
\end{rem}

\section{Applications}\label{sec:appl}
In this section we show how the tools developed in previous section, and in particular the decomposition \eqref{ln} together with the condition \eqref{ln0}, can be conveniently used  for
proving CLTs in several situations, mostly in
 summation schemes of $\{0,\,1\}$-valued random variables, as  was indicated in Section~\ref{sec:fac_cum}.
First, four more or less standard limit results for the binomial,
negative binomial, hypergeometric and negative hypergeometric schemes
will be re--proved. Then more involved schemes of allocation problems
for distinguishable balls, indistinguishable balls, coloured balls, and random forests will be considered. The CLTs for the number of boxes with exactly $r$ balls in the first two problems and for the number of trees with exactly $r$ non-root vertices in the third problem will be derived. While the CLT in the case of distinguishable balls has been  known in the literature for years (see, e.g., \cite{ksc}),  the main focus in the other two cases appears to be on the local limit theorems (see, e.g., \cite{k1,k2,p_bk}). We have not found any references for the asymptotic normality results for the problems we consider in GIAS models.

The models in Sections~\ref{indballs}--\ref{forest} are examples of the  {\em generalized allocation schemes} (GAS), that is,
\be\lbl{eq:gas}
\P(\xi_1^{(n)}=k_1,\ldots,\xi_N^{(n)}=k_N)=\P(\eta_1=k_1,\ldots,\eta_N=k_N|\eta_1+\ldots+\eta_N=n),
\ee
where $\eta_1,\ldots,\eta_N$ are independent random variables.

On the other hand the models in Sections~\ref{neg_mult} and \ref{dir} are examples  of what may be called the {\em generalized inverse allocation schemes} (GIAS), that is,
\be\lbl{eq:gias}
\P(\xi_1^{(n)}=k_1,\ldots,\xi_N^{(n)}=k_N)= C\, \P(\eta_1=k_1,\ldots,\eta_N=k_N|\eta_0+\eta_1+\ldots+\eta_N=n+k_1+\ldots+k_N),
\ee
where $\eta_0,\eta_1,\ldots,\eta_N$ are independent random variables,
$C$ is a proportionality constant and the equality is understood to
hold whenever the right hand--side is summable. This last requirement
is not vacuous: if, e.g., $N=1$ and $\eta_0=n$ a.s.,  then trivially
the probability on the right--hand side of \eqref{eq:gias} is 1
regardless of the value of $k_1$, and hence these probabilities are not summable as long as $\eta_1$ takes on infinitely many values.

In practical situations of GAS models $(\eta_j)$ are identically
distributed, and in the case of GIAS we assume that the $\eta_j$ have the same distribution for $j=1,\dots, N$, which may differ from the distribution of $\eta_0$.

In the derivations below we will often use the following expansion
\be\label{expan}
\left(1-\frac{a}{b}\right)^c=e^{c\log\left(1-\frac{a}{b}\right)}=e^{-c\sum\limits_{j=1}^{\infty}\frac{a^j}{jb^j}}\;,
\ee
which is valid for any $0<|a|<|b|$ and any real $c$. We also recall
(see,  e.g., \cite[Chapter~ 6.5]{gkp}) that
\be\label{bern}
\Q_{j+1}(M):=\sum_{k=1}^{M-1}\,k^j=\frac{1}{j+1}\sum_{l=1}^{j+1}\,\binom{j+1}{l}\,B_{j+1-l}\,M^l,
\ee
where $(B_k)$ are the Bernoulli numbers.  Clearly, $\Q_j$ is a polynomial of degree $j$ satisfying \eqref{polbound} with $C=1$.
For notational convenience we let
$$
T(m,t)=\prod_{k=1}^{m-1}\,\left(1-\frac{k}{t}\right)
$$
for $t>0$ and integer $m>0$. It follows from \eqref{expan} and \eqref{bern}  that for $t>m$
\be\label{tmz}
T(m,t)=e^{-\sum_{j\ge 1}\,\frac{1}{jt^j}\Q_{j+1}(m)}.
\ee

\subsection{Classical discrete distributions}
In this subsection we re-derive asymptotic normality of
$$\frac{S_n-\E\,S_n}{\sqrt{\var\,S_n}}$$ for laws of $S_n$ belonging to four classical families of discrete distributions: binomial, negative binomial, hypergeometric and negative hypergeometric.
\subsubsection{Binomial scheme}
Let $(\eps_i)$ be a sequence of
iid Bernoulli random variables, $P(\eps_1=1)=p=1-P(\eps_1=0)$. Then $S_n=\sum_{i=1}^n\:\eps_i$ has the binomial $b(n,p)$ distribution.
To see how Theorem \ref{gen} allows us  to re-derive de Moivre-Laplace theorem
\be\label{ML}
\frac{S_n-np}{\sqrt{np(1-p)}}\stackrel{d}{\to}{\cal N}(0,1),
\ee
in a simple way, we first set $L_n=np$. Then $\E\,S_n=L_n$ and $\var\,S_n=L_n(1-p)$. Furthermore, $\P(\eps_1=\ldots=\eps_i=1)=p^i$ and thus by
\eqref{ce_simp} it follows that the $i$th factorial moment of $S_n$ is
\be\label{cml} c_{i,n}=\E(S_n)_i=(n)_ip^i=L_n^i\,T(i,n)\;.\ee
Thus \eqref{tmz} implies representation \eqref{ln}  with
$Q_{j+1}=-\Q_{j+1}$ and \eqref{ML} follows from Remark \ref{orden}.

\subsubsection{Negative binomial scheme}
Let  $S_n$ denote the number of failures until the $n$th success in
Bernoulli trials,  with $p$ being the probability of a success in a single trial, that is, $S_n$ is negative binomial $nb(n,p)$ with
$$
\P(S_n=k)=\binom{n+k-1}{k}(1-p)^kp^n\quad k=0,1,\ldots.
$$

We will show how Theorem \ref{gen} allows us 
to re-derive the CLT  for $(S_n)$ in a simple way, which states that for $n\to\infty$
\be\label{ctgnb}
\frac{pS_n-n(1-p)}{\sqrt{n(1-p)}}\stackrel{d}{\to}{\cal N}(0,1)\;.
\ee
Set $L_n=n\frac{1-p}{p}$ so that $\E\,S_n=L_n$ and $\var\,S_n=\frac{L_n}{p}$. Furthermore,  the  $i$th factorial moment of $S_n$ is easily derived as
$$
c_{i,n}=\E(S_n)_i=L_n^i\,T(i,-n)\;.
$$
Hence \eqref{ln} holds with
$Q_{j+1}=(-1)^{j+1}\Q_{j+1}$ and thus \eqref{ctgnb} follows from Remark \ref{orden}.

\subsubsection{Hypergeometric scheme}
From an urn containing $N$ white and $M$ black balls we draw subsequently without replacement $n$ balls ($n\le \min\{M,N\}$). For $i=1,\ldots,n$, let $\eps_i=1$ if a white ball is drawn at the $i$th drawing and let  $\eps_i=0$ otherwise. Then $S_n=\sum_{i=1}^n\,\eps_i$ has a hypergeometric distribution $Hg(N,M;n)$, that is,
$$
\P(S_n=k)=\frac{\binom{N}{k}\,\binom{M}{n-k}}{\binom{N+M}{n}}\;,\quad k=0,1,\ldots,n\;.
$$

Using again Theorem \ref{gen} we will show that under the assumptions 
$N=N(n)\to\infty$, $M=M(n)\to\infty$,  and $\frac{N}{N+M}\to p\in(0,1)$ with $n\to\infty$
\be\label{hypas}
\frac{(N+M)S_n-nN}{\sqrt{nNM(N+M-n)/(N+M-1)}}\stackrel{d}{\to} \mathcal{N}(0,1).
\ee
Setting $L_n=\frac{nN}{N+M}$ we have
\be\label{ev}
\E\,S_n=L_n\quad\mbox{and}\quad \var\,S_n=L_n\frac{M(N+M-n)}{(N+M)(N+M-1)}\;.
\ee
Moreover, on noting that $(\eps_1,\ldots,\eps_n)$ is exchangeable by \eqref{ce_simp} we get
$$
c_{i,n}=\E\,(S_n)_i=(n)_i\P(\eps_1=\ldots=\eps_i=1)=\frac{(n)_i(N)_i}{(N+M)_i}=L_n^i\,
\frac{T(i,n)\,T(i,N)}{T(i,N+M)}.
$$
As in earlier schemes we obtain representation \eqref{ln} with
$$
Q_{j+1}=\left[-1-\left(\frac{n}{N}\right)^j+\left(\frac{n}{N+M}\right)^j\right]\Q_{j+1}.
$$
Moreover, the condition \eqref{ln0} is fulfilled since $\E\,S_n$, $\var\,S_n$ and $L_n$ are all of order $n$. See again Remark~\ref{orden} to conclude that \eqref{hypas} holds true.

\subsubsection{Negative hypergeometric scheme}
Let  $S_n$ be a random variable with negative hypergeometric distribution of the first kind, that is,
$$
\P(S_n=k)=\binom{n}{k}\frac{B(\alpha_n+k,\beta_n+n-k)}{B(\alpha_n,\beta_n)}\quad k=0,1,\ldots,n,
$$
with $\alpha_n=n\alpha$ and $\beta_n=n\beta$.
The CLT for $S_n$  states that for $n\to\infty$
\be\label{ctgnhg1}
\frac{(\alpha+\beta)^{\frac{3}{2}}S_n-n\alpha\sqrt{\alpha+\beta}}{\sqrt{n\alpha\beta(1+\alpha+\beta)}}\stackrel{d}{\to}{\cal N}(0,1)\;.
\ee
To quickly derive it from Theorem \ref{gen}, let 
$L_n=\frac{n\alpha}{\alpha+\beta}$ and note that $$\E\,S_n=L_n\qquad\mbox{and}\qquad\var\,S_n=L_n\frac{n\beta(1+\alpha+\beta)}{(\alpha+\beta)^2(n\alpha+n\beta+1)}.$$
Further,  the $i$th factorial moment of $S_n$ is easily derived as
$$ c_{i,n}=\E(S_n)_i=L_n^i\,\frac{T(i,n)\,T(i,-\alpha n)}{T(i,-(\alpha+\beta)n)}\;.
$$
Thus,  again due to \eqref{tmz} we conclude that  representation \eqref{ln} holds with
$$
Q_{j+1}=\left(-1-\frac{(-1)^j}{\alpha^j}+\frac{(-1)^j}{(\alpha+\beta)^j}\right)
\Q_{j+1}(i).
$$
The final result follows by Remark~\ref{orden}.

\subsection{Asymptotics of occupancy in generalized allocation schemes (GAS)}\lbl{sec:gas}
In this subsection we will derive asymptotics for 
\[S_n^{(r)}=\sum_{i=1}^N\,I(\xi_i^{(n)}=r)\] in several generalized allocation schemes as defined at the beginning of Section \ref{sec:appl}. As we will see,  when $n\to \infty$ and  $N/n\to\lambda\in(0,\infty]$ the order of $\E\,S_n^{(r)}$ is $n^r/N^{r-1}$  for any $r=0,1,\ldots$, and the order of  $\var\,S_n^{(r)}$ is the same for $r\ge2$. When $\lambda=\infty$ and $r=0$ or  $1$ the order of $\var\,S_n^{(r)}$   is $n^2/N$. Consequently, we will derive asymptotic normality of
$$
\frac{S_n-\E\,S_n^{(r)}}{\sqrt{n^r/N^{r-1}}}
$$
when either
\begin{itemize}
\item[{\em (a)}] $r\ge 0$ and $\lambda<\infty$  or,
\item[{\em (b)}] $r\ge 2$, $\lambda=\infty$ and  $\frac{n^r}{N^{r-1}}\to\infty$,
\end{itemize}
and  asymptotic normality of
$$
\sqrt{N}\,\frac{S_n^{(r)}-\E\,S_n^{(r)}}{n}
$$
when $\lambda=\infty$, $\frac{n^2}{N}\to\infty$ and $r=0,1$.

Although in all the cases results look literally the same (with
different asymptotic expectations and variances and having different
proofs) for the sake of precision we decided to repeat formulations of
theorems in each of the subsequent cases.
\subsubsection{Indistinguishable balls}\label{indballs}
Consider a scheme of a random distribution of $n$
indistinguishable balls into $N$ distinguishable boxes, such that
all distributions are equiprobable. 
That is, if $\xi_i=\xi_i^{(n)}$ denotes
the number of balls which fall into the $i$th box, $i=1,\ldots,N$,
then
$$
\P(\xi_1=i_1,\ldots,\xi_N=i_N)=\binom{n+N-1}{n}^{-1}
$$
for any $i_k\ge 0$, $k=1,\ldots,N$, such that $i_1+\ldots+i_N=n$. Note that this is GAS and that \eqref{eq:gas} holds with $\eta_i\sim Geom(p)$, $0<p<1$.

Let 
\[S_n^{(r)}=\sum_{i=1}^N\:I(\xi_i=r)\] 
denote the number of boxes with exactly $r$ balls.
 Note that the distribution of $(\xi_1,\ldots,\xi_N)$ is
exchangeable. Moreover,
$$
\P(\xi_1=\ldots=\xi_i=r)=\frac{\binom{n-ri+N-i-1}{n-ri}}{\binom{n+N-1}{n}}\;.
$$
Therefore by \eqref{ce_simp} we get  \be\label{ce}
c_{i,n}=\E(S_n^{(r)})_i=\frac{(N)_i(N-1)_i(n)_{ir}}{(n+N-1)_{i(r+1)}}. \ee
 Consequently,
$$
\E\,S_n^{(r)}=c_{1,n}=\frac{N(N-1)(n)_r}{(n+N-1)_{r+1}},
$$
and since $\var\,S_n^{(r)}=c_{2,n}-c_{1,n}^2+c_{1,n}$ we have
$$
\var\,S_n^{(r)}=\frac{N(N-1)^2(N-2)(n)_{2r}}{(n+N-1)_{2r+2}}
-\left(\frac{N(N-1)(n)_r}{(n+N-1)_{r+1}}\right)^2+\frac{N(N-1)(n)_r}{(n+N-1)_{r+1}}.
$$

In the asymptotics below we consider the situation when $n\to \infty$ and  $\frac{N}{n}\to\lambda\in(0,\infty].$
Then for any integer $r\ge 0$
\be\label{ase}
\frac{N^{r-1}}{n^r}\E\,S_n^{(r)}\to \left(\frac{\lambda}{1+\lambda}\right)^{r+1}\quad(=1\;\;\mbox{for}\;\;\lambda=\infty )\;.
\ee
It is also elementary but more laborious to prove that,  for any $r\ge 2$ and $\lambda\in(0,\infty]$ or $r=0,1$ and $\lambda\in(0,\infty)$
\be\label{varr}
\frac{N^{r-1}}{n^r}\var\,S_n^{(r)}\to
\left(\frac{\lambda}{1+\lambda}\right)^{r+1}\left(1-\frac{\lambda\left(1+\lambda+(\lambda
r-1)^2\right)}{(1+\lambda)^{r+2}}\right)=:\sigma^2_r \quad (=1\;\;\mbox{for}\;\;\lambda=\infty)\;.
\ee
Further, for $\lambda=\infty$
$$
\frac{N}{n^2}\var\,S_n^{(0)}\to 1=: \tilde{\sigma}_0^2\quad\mbox{and}\quad \frac{N}{n^2}\var\,S_n^{(1)}\to 4=: \tilde{\sigma}_1^2.
$$
Similarly, in this case
$$
\frac{N}{n^2}\,\cov(S_n^{(0)},S_n^{(1)})\to -2\quad \mbox{and}\quad \frac{N}{n^2}\,\cov(S_n^{(0)},S_n^{(2)})\to 1,
$$
and thus for the correlation coefficient we have
\be\label{asco}
\rho(S_n^{(0)},S_n^{(1)})\to -1 \quad \mbox{and}\quad \rho(S_n^{(0)},S_n^{(2)})\to 1.
\ee

Now we are ready to deal with CLTs.
\begin{thm}\lbl{dibal}
Let $N/n\to\lambda\in(0,\infty]$. Let either \newline {\em (a)} $r\ge 0$ and $\lambda<\infty$,  or \newline {\em (b)} $r\ge 2$, $\lambda=\infty$ and  $\frac{n^r}{N^{r-1}}\to\infty$.

 Then
$$
\frac{S_n^{(r)}-\E\,S_n^{(r)}}{\sqrt{n^r/N^{r-1}}}\stackrel{d}{\to}{\cal
N}(0,\sigma^2_r)\;.
$$
\end{thm}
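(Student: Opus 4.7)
The strategy is to verify the hypotheses of Theorem~\ref{gen}; the variance constant $\sigma_r^2$ then comes from the precomputed asymptotic \eqref{varr} after rescaling the conclusion \eqref{gencon}. Conditions \eqref{war1} and \eqref{war2} are immediate from \eqref{ase} and \eqref{varr}: in case (a) both $\E\,S_n^{(r)}$ and $\var S_n^{(r)}$ are of order $n^r/N^{r-1} \asymp n$, while in case (b) the hypothesis $n^r/N^{r-1}\to\infty$ gives $\var S_n^{(r)} \sim n^r/N^{r-1} \to \infty$, and $\E\,S_n^{(r)}/\var^{3/2} S_n^{(r)} \asymp (n^r/N^{r-1})^{-1/2} \to 0$.

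The core step is to expose the decomposition \eqref{ln} for the factorial moments in \eqref{ce}. Writing each Pochhammer symbol as $(a)_k = a^k T(k,a)$ gives
\begin{equation*}
c_{i,n} = L_n^i \cdot \frac{T(i,N)\,T(i,N-1)\,T(ri,n)}{T(i(r+1),\,n+N-1)}, \qquad L_n = \frac{N(N-1)\,n^r}{(n+N-1)^{r+1}}.
\end{equation*}
Applying \eqref{tmz} to each of the four $T$-factors yields \eqref{ln} with
\begin{equation*}
Q_{j+1}^{(n)}(i) = \left(\frac{n}{n+N-1}\right)^j \Q_{j+1}(i(r+1)) - \left[\left(\frac{n}{N}\right)^j + \left(\frac{n}{N-1}\right)^j\right] \Q_{j+1}(i) - \Q_{j+1}(ri).
\end{equation*}
Since $n/N$, $n/(N-1)$, $n/(n+N-1)$ all stay bounded (by a constant depending on $\lambda$ in case (a); by $1$ in case (b)), combining with $|\Q_{j+1}(m)| \le m^{j+1}$ gives \eqref{polbound} with $C$ depending only on $\lambda$ and $r$.

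The last step is \eqref{ln0}, which is where I expect the case split to really bite. In case (a) we have $L_n \asymp n \asymp \var S_n^{(r)}$, so $L_n^J/(n^{J-1}\var^{J/2}S_n^{(r)}) \asymp n^{1-J/2} \to 0$ for $J\ge 3$. In case (b), $L_n \asymp n^r/N^{r-1} \asymp \var S_n^{(r)}$, and the ratio becomes $(n^r/N^{r-1})^{J/2}/n^{J-1}$. The subtle point is that although $n^r/N^{r-1}\to\infty$ is assumed, we simultaneously have $n^r/N^{r-1} = n\,(n/N)^{r-1} = o(n)$ because $r\ge 2$ and $n/N\to 0$; hence $(n^r/N^{r-1})^{J/2} = o(n^{J/2}) = o(n^{J-1})$ for $J\ge 3$. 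Theorem~\ref{gen} now gives \eqref{gencon}, and combining with \eqref{varr} to rescale from $\sqrt{\var S_n^{(r)}}$ to $\sqrt{n^r/N^{r-1}}$ produces the asserted convergence to $\mathcal{N}(0,\sigma_r^2)$.
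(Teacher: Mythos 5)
Your proposal is correct and follows essentially the same route as the paper: the same factorization $c_{i,n}=L_n^i\,T(i,N)T(i,N-1)T(ir,n)/T(i(r+1),n+N-1)$ with the same $L_n$ and the same polynomials $Q_{j+1}$, followed by an application of Theorem~\ref{gen}. The only difference is cosmetic: where the paper delegates the verification of \eqref{war1}, \eqref{war2} and \eqref{ln0} to Remark~\ref{order}, you carry out that order-of-magnitude computation explicitly (and correctly, including the observation that $n^r/N^{r-1}=n(n/N)^{r-1}$ handles case (b)).
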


\begin{proof}
Note that \eqref{ce} can be rewritten as
$$
c_{i,n}=L_n^i\,\frac{T(i,N)\,T(i,N-1)\,T(ir,n)}
{T(i(r+1),n+N-1)}\quad\mbox{with}\quad
L_n=\frac{N(N-1)n^r}{(n+N-1)^{r+1}}.
$$
Therefore,  as in the previous cases, using \eqref{tmz} we conclude that representation \eqref{ln} holds with
$$
Q_{j+1}(i)=-\left[\left(\frac{n}{N}\right)^j+\left(\frac{n}{N-1}\right)^j\right]\Q_{j+1}(i)-\Q_{j+1}(ri)+\left(\frac{n}{n+N-1}\right)^j\Q_{j+1}((r+1)i).
$$
To conclude the proof we note that $\E\,S_n^{(r)}$, $\var\,S_n^{(r)}$
and $L_n$ are of the same order, $n^r/N^{r-1}$,  and use Remark~\ref{order} stated below.
\end{proof}

\begin{rem}\label{order} If $\E\,S_n^{(r)}$ and $\var\,S_n^{(r)}$ are of the same order and diverge to $\infty$, then \eqref{war1} and \eqref{war2} hold.
Moreover, if $L_n$ and $\var\,S_n^{(r)}$ are both of order $n^r/N^{r-1}$ then the left--hand side of \eqref{ln0} is of order
$$
\frac{1}{n^{J-1}}\,\left(\frac{n^r}{N^{r-1}}\right)^{\frac{J}{2}}=\left(\frac{n}{N}\right)^{\frac{J}{2}(r-1)}\,\frac{1}{n^{\frac{J}{2}-1}}.
$$
That is, when  either  $\lambda\in(0,\infty)$ and $r=0,1,\ldots$ or  $\lambda=\infty$ and $r= 2,3 \ldots$ the condition \eqref{ln0} is satisfied.
\end{rem}

In the remaining cases we use asymptotic correlations

\begin{thm}
Let $N/n\to\infty$ and $n^2/N\to\infty$.
Then for $r=0,1$
$$
\sqrt{N}\,\frac{S_n^{(r)}-\E\,S_n^{(r)}}{n}\stackrel{d}{\to}{\cal
N}\left(0,\tilde{\sigma_r}^2\right)\;.
$$
\end{thm}

\begin{proof}
Due to the second equation in \eqref{asco}, it follows that
$$
\sqrt{N}\,\frac{S_n^{(0)}-\E\,S_n^{(0)}}{\tilde{\sigma}_0 n}-\sqrt{N}\,\frac{S_n^{(2)}-\E\,S_n^{(2)}}{\sigma_2 n}\stackrel{L^2}{\to}0.
$$
Therefore the result for $r=0$ holds.
Similarly, for $r=1$ it suffices to observe that the first equation in \eqref{asco} implies
$$
\sqrt{N}\,\frac{S_n^{(0)}-\E\,S_n^{(0)}}{\tilde{\sigma}_0 n}+\sqrt{N}\,\frac{S_n^{(1)}-\E\,S_n^{(1)}}{\tilde{\sigma}_1 n}\stackrel{L^2}{\to}0.
$$
\end{proof}

\subsubsection{Distinguishable balls}\label{dibal}
Consider a scheme of a random distribution of $n$ distinguishable
balls into $N$ distinguishable boxes, such that any such
distribution is equally likely. Then, if $\xi_i=\xi_i^{(n)}$ denotes the number
of balls which fall into the $i$th box, $i=1,\ldots,N$,
$$
\P(\xi_1=i_1,\ldots,\xi_N=i_N)=\frac{n!}{i_1!\ldots i_N!}N^{-n}
$$
for any $i_l\ge 0$, $l=1,\ldots,N$, such that $i_1+\ldots+i_N=n$. This is a GAS with $\eta_i\sim Poisson(\lambda)$, $\lambda>0$, in \eqref{eq:gas}.

For a fixed non--negative integer $r$ let
$$
S_n^{(r)}=\sum_{i=1}^N\:I(\xi_i=r)
$$
be the number of boxes with exactly $r$ balls.
 Obviously, the distribution of $(\xi_1,\ldots,\xi_N)$ is
exchangeable, and
$$
\P(\xi_1=\ldots=\xi_i=r)=\frac{n!}{(r!)^i(n-ir)!}N^{-ir}\left(1-\frac{i}{N}\right)^{n-ir}\;.
$$
Therefore by \eqref{ce_simp} we get  \be\label{cee}
c_{i,n}=\E\,(S_n^{(r)})_i=\frac{(N)_i(n)_{ir}}{(r!)^i
N^{ri}}\left(1-\frac{i}{N}\right)^{n-ir}.\ee
Consequently, for any $r=0,1,\ldots$
$$
\E\,S_n^{(r)}=c_{1,n}=\frac{(n)_r\left(1-\frac{1}{N}\right)^{n-r}}{r!N^{r-1}}
$$
and
$$
\var\,S_n^{(r)}=c_{2,n}-c_{1,n}^2+c_{1,n}=\frac{(N-1)(n)_{2r}\left(1-\frac{2}{N}\right)^{n-2r}}{(r!)^2
N^{2r-1}}-\frac{(n)_r^2\left(1-\frac{1}{N}\right)^{2(n-r)}}{(r!)^2N^{2(r-1)}}+\frac{(n)_r\left(1-\frac{1}{N}\right)^{n-r}}{r!N^{r-1}}.
$$
In the asymptotics below we consider the situation when $n\to \infty$ and  $\frac{N}{n}\to\lambda\in(0,\infty].$
Then,  for any integer $r\ge 0$,
\be\lbl{mrr}
\lim_{n\to\infty}\,\frac{N^{r-1}}{n^r}\E\,S_n^{(r)}=\frac{1}{r!}e^{-\frac{1}{\lambda}}\quad \left(=\frac{1}{r!}\;\;\mbox{for}\;\;\lambda=\infty\right).
\ee
It is also elementary but more laborious to check that, for any fixed $r\ge 2$ and $\lambda\in(0,\infty]$ or $r=0,1$ and $\lambda\in(0,\infty)$,  \be\lbl{varr_super}
\lim_{n\to \infty}
\frac{N^{r-1}}{n^r}\var\,S_n^{(r)}=\frac{e^{-\frac{1}{\lambda}}}{r!}\left(1-\frac{e^{-\frac{1}{\lambda}}(\lambda+(\lambda r-1)^2)}{r!\lambda^{r+1}}\right):=\sigma_r^2\quad \left(=\frac{1}{r!}\;\;\mbox{for}\;\;\lambda=\infty\right). \ee
Further, for $\lambda=\infty$, 
$$
\frac{N}{n^2}\,\var\,S_n^{(0)}\to \frac{1}{2}=:\tilde{\sigma}_0^2\quad\mbox{and}\quad \frac{N}{n^2}\,\var\,S_n^{(1)}\to 2=:\tilde{\sigma}_1^2.
$$
Similarly one can prove that
$$
\frac{N}{n^2}\,\cov(S_n^{(0)},S_n^{(1)})\to -1.
$$
Therefore, for the correlation coefficients we have
\be\label{ro1}
\rho(S_n^{(0)},S_n^{(1)})\to -1.
\ee
Since
$$
\frac{N}{n^2}\,\var\,S_n^{(2)}\to \frac{1}{2}=\sigma_2^{(2)}\quad\mbox{and}\quad \frac{N}{n^2}\,\cov(S_n^{(0)},S_n^{(2)})\to \frac{1}{2}
$$
we also have
\be\label{ro2}
\rho(S_n^{(0)},S_n^{(2)})\to 1.
\ee
We  consider the cases when $r\ge 2$ and $\lambda=\infty$ or $r\ge0$ and $\lambda\in(0,\infty)$.
\begin{thm}\label{indibal}
Let $N/n\to\lambda\in(0,\infty]$. Let either \newline {\em (a)} $r\ge 0$ and $\lambda<\infty$,  or \newline {\em (b)} $r\ge 2$, $\lambda=\infty$ and  $\frac{n^r}{N^{r-1}}\to\infty$.

Then
$$
\frac{S_n^{(r)}-\E\,S_n^{(r)}}{\sqrt{n^r/N^{r-1}}}\stackrel{d}{\to}{\cal
N}\left(0,\sigma_r^2\right)\;.
$$
\end{thm}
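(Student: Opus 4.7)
The plan is to mimic the proof of Theorem~\ref{dibal} and apply Theorem~\ref{gen} (via Remark~\ref{order}) after establishing the representation \eqref{ln} for the factorial moments \eqref{cee}. First, I would factor
$$c_{i,n} = \left(\frac{n^r}{r!\, N^{r-1}}\right)^i T(i, N)\, T(ir, n)\, (1 - i/N)^{n-ir},$$
using $(N)_i = N^i T(i, N)$ and $(n)_{ir} = n^{ir} T(ir, n)$. The novel feature compared with the indistinguishable-balls case is the factor $(1-i/N)^{n-ir}$: expanding it via \eqref{expan} yields a term $-ni/N$ which is linear in $i$ but has coefficient of order $n/N$, and this cannot be absorbed into any polynomial $Q_2^{(n)}(i)$ satisfying the uniform bound \eqref{polbound}.

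To circumvent this obstruction I would enlarge $L_n$ to
$$L_n = \frac{n^r}{r!\, N^{r-1}}\, e^{-n/N},$$
so that the offending linear contribution is absorbed into $L_n^i$. Then, taking $\log(c_{i,n}/L_n^i)$ and applying \eqref{tmz} to the two $T$-factors together with \eqref{expan} to $(1-i/N)^{n-ir}$, the $j=1$ summand of $-n\sum_{j\ge 1} i^j/(jN^j)$ cancels exactly with the $in/N$ from the absorbed exponential. Rewriting $N^{-j} = (n/N)^j n^{-j}$ throughout and re-indexing the residual piece $-n\sum_{j\ge 2} i^j/(jN^j)$ via $j \mapsto j-1$, I expect to arrive at representation \eqref{ln} with
$$Q_{j+1}^{(n)}(i) = -\Q_{j+1}(ir) - (n/N)^j\, \Q_{j+1}(i) - \tfrac{j}{j+1}(n/N)^{j+1} i^{j+1} + r(n/N)^j i^{j+1}.$$
Each of the four summands is a polynomial in $i$ of degree at most $j+1$, and since $n/N$ is uniformly bounded by some $A$ (as $N/n \to \lambda \in (0,\infty]$), each is bounded by $\max(r, A, 1)^{j+1} i^{j+1}$; adding the four pieces and absorbing the factor $4 \le 2^{j+1}$ for $j\ge 1$ yields \eqref{polbound} with a single constant $C$ depending only on $r$ and $A$.

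Finally, since $e^{-n/N}$ is bounded between a positive constant (equal to $e^{-1/\lambda}$ if $\lambda < \infty$, or tending to $1$ if $\lambda = \infty$) and $1$, $L_n$ has the same order $n^r/N^{r-1}$ as $\E S_n^{(r)}$ and $\var S_n^{(r)}$ by \eqref{mrr}--\eqref{varr_super}; Remark~\ref{order} then provides \eqref{war1}, \eqref{war2} and \eqref{ln0} under either hypothesis (a) or (b), and Theorem~\ref{gen} delivers the claimed CLT. I anticipate the main technical subtlety to be exactly the choice of $L_n$: without the additional factor $e^{-n/N}$ the linear-in-$i$ contribution of $(1-i/N)^n$ would force the polynomial $Q_2^{(n)}$ to carry an $n$-dependent coefficient, making the uniform bound \eqref{polbound} impossible.
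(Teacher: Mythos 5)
Your proposal is correct and follows essentially the same route as the paper: the paper writes $c_{i,n}=L_n^i\,e^{in/N}(1-i/N)^{n-ir}\,T(i,N)\,T(ir,n)$ with $L_n=\frac{n^re^{-n/N}}{r!N^{r-1}}$, which is exactly your factorization, and its polynomial $Q_{j+1}(i)=\left(r-\frac{j}{j+1}\frac{n}{N}\right)\left(\frac{n}{N}\right)^j i^{j+1}-\left(\frac{n}{N}\right)^j\Q_{j+1}(i)-\Q_{j+1}(ri)$ coincides term by term with yours, after which both arguments conclude via Remark~\ref{order}. Your explicit verification of \eqref{polbound} and your identification of the $e^{-n/N}$ factor in $L_n$ as the key device are points the paper leaves implicit, but they do not constitute a different method.
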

\begin{proof}
Write $c_{i,n}$ as
$$
c_{i,n}=L_n^i\,e^{i\frac{n}{N}}\,\left(1-\frac{i}{N}\right)^{n-ir}\,T(i,N)\,T(ir,n),\quad\qquad\mbox{where}\quad\quad
L_n=\frac{n^re^{-\frac{n}{N}}}{r!N^{r-1}}.
$$
Then, the representation \eqref{ln} holds with
$$
Q_{j+1}(i)= \left(r-\frac{j}{j+1}\,\frac{n}{N}\right)\left(\frac{n}{N}\right)^j\,i^{j+1}-\left(\frac{n}{N}\right)^j\Q_{j+1}(i)-\Q_{j+1}(ri).
$$
Since $\E\,S_n^{(r)}$, $\var\,S_n^{(r)}$ and  $L_n$ are of order
$n^r/N^{r-1}$,  the final result follows by Remark \ref{order}.
\end{proof}

As in the case of indistinguishable balls, using \eqref{ro2} and \eqref{ro1} we get
 the following
\begin{thm}\label{thm:diballs_r01}
Let $N/n\to\infty$ and $n^2/N\to\infty$.
Then for $r=0,1$
$$
\sqrt{N}\,\frac{S_n^{(r)}-\E\,S_n^{(r)}}{n}\stackrel{d}{\to}{\cal
N}\left(0,\tilde{\sigma_r}^2\right)\;.
$$
\end{thm}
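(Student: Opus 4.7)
The plan is to mimic the pattern already used for indistinguishable balls: bootstrap the $r=0,1$ CLTs from the already-proved CLT for $r=2$ by way of the asymptotic perfect correlations \eqref{ro1} and \eqref{ro2}. The hypotheses $N/n\to\infty$ and $n^2/N\to\infty$ are precisely those needed so that Theorem~\ref{indibal}(b) applies with $r=2$ (since $n^r/N^{r-1}=n^2/N\to\infty$).

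First, I would invoke Theorem~\ref{indibal}(b) with $r=2$ to obtain
\[
V_n:=\sqrt{N}\,\frac{S_n^{(2)}-\E\,S_n^{(2)}}{\sigma_2\,n}\stackrel{d}{\to}\mathcal{N}(0,1),
\]
where $\sigma_2^2=1/2$ (which matches $\sigma_2^{(2)}$ from the asymptotic variance computation recorded just before \eqref{ro2}). Next, for $r=0$, set
\[
U_n^{(0)}:=\sqrt{N}\,\frac{S_n^{(0)}-\E\,S_n^{(0)}}{\tilde{\sigma}_0\,n}.
\]
By construction $\var U_n^{(0)}\to 1$ and $\var V_n\to 1$, while \eqref{ro2} gives $\cov(U_n^{(0)},V_n)\to 1$. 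Hence
\[
\E\bigl(U_n^{(0)}-V_n\bigr)^2=\var U_n^{(0)}+\var V_n-2\cov(U_n^{(0)},V_n)\longrightarrow 0,
\]
so $U_n^{(0)}-V_n\to 0$ in $L^2$, and consequently in probability. Slutsky's theorem then gives $U_n^{(0)}\stackrel{d}{\to}\mathcal{N}(0,1)$, which is equivalent to the claimed convergence for $r=0$.

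For $r=1$, I would repeat the argument with a sign flip: set
\[
U_n^{(1)}:=\sqrt{N}\,\frac{S_n^{(1)}-\E\,S_n^{(1)}}{\tilde{\sigma}_1\,n}.
\]
By \eqref{ro1}, $\rho(S_n^{(0)},S_n^{(1)})\to -1$, so $\cov(U_n^{(0)},U_n^{(1)})\to -1$, and therefore
\[
\E\bigl(U_n^{(0)}+U_n^{(1)}\bigr)^2=\var U_n^{(0)}+\var U_n^{(1)}+2\cov(U_n^{(0)},U_n^{(1)})\longrightarrow 0.
\]
Thus $U_n^{(1)}+U_n^{(0)}\stackrel{L^2}{\to}0$. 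Since $U_n^{(0)}\stackrel{d}{\to}\mathcal{N}(0,1)$ has already been established, another application of Slutsky yields $U_n^{(1)}\stackrel{d}{\to}\mathcal{N}(0,1)$, which is the claim for $r=1$.

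The only substantive technical point is the verification of the asymptotic variance and covariance relations $\frac{N}{n^2}\var S_n^{(r)}\to \tilde{\sigma}_r^2$ and $\frac{N}{n^2}\cov(S_n^{(0)},S_n^{(j)})\to$ the stated limits; these are the calculations that feed into \eqref{ro1}--\eqref{ro2}. Since these limits are already recorded in the paper just before the theorem, the proof reduces to the two-line Slutsky-plus-$L^2$ argument outlined above, and no additional use of Theorem~\ref{gen} is required for $r=0,1$. The expected obstacle — namely, building a direct CLT for $r\in\{0,1\}$ when $\lambda=\infty$, where the natural scaling shifts from $\sqrt{n^r/N^{r-1}}$ to $n/\sqrt{N}$ — is bypassed entirely by piggy-backing on the $r=2$ case.
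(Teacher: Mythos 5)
Your proposal is correct and is essentially the paper's own argument: the paper likewise derives the $r=0,1$ cases by combining the CLT for $S_n^{(2)}$ (Theorem~\ref{indibal}(b) with $r=2$, where the normalization $\sqrt{n^2/N}$ coincides with $n/\sqrt{N}$) with the asymptotic correlations \eqref{ro1}--\eqref{ro2}, which give $L^2$-convergence to zero of the appropriate difference (for $r=0$) and sum (for $r=1$) of the standardized variables. Your write-up merely makes explicit the variance/covariance bookkeeping and the Slutsky step that the paper leaves implicit by referring back to the indistinguishable-balls case.
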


\subsubsection{Coloured balls}
An urn contains $NM$ balls, $M$ balls of each of $N$ colors. From the urn a simple random sample of $n$ elements is drawn. We want to study  the asymptotics of the number of colors with exactly $r$ balls in the sample. More precisely, let $\xi_i=\xi_i^{(n)}$ denote the number of balls of color $i$, $i=1,\ldots,N$. Then
$$
\P(\xi_1=k_1,\ldots,\xi_{N}=k_N)=\frac{\prod_{i=1}^N\,\binom{M}{k_i}}{\binom{NM}{n}}
$$
for all integers $k_i\ge 0$, $i=1,\ldots,N$, such that $\sum_{i=1}^N\,k_i=n$. Obviously, the random vector $(\xi_1,\ldots,\xi_N)$ is exchangeable and the GAS equation \eqref{eq:gas} holds with $\eta_i\sim b(M,p)$, $0<p<1$.

For an integer $r\ge 0$ we want to study the asymptotics of
$$
S_n^{(r)}=\sum_{i=1}^N\,I(\xi_i=r).
$$
For the $i$the factorial moment we get
\be\label{ccee}
c_{i,n}=(N)_i\,\P(\xi_1=\ldots\xi_i=r)=(N)_i\frac{\binom{M}{r}^i\binom{(N-i)M}{n-ri}}{\binom{NM}{n}}.
\ee
Consequently, for any $r=0,1,\ldots$
$$
\E\,S_n^{(r)}=c_{1,n}=N\frac{\binom{M}{r}\binom{(N-1)M}{n-r}}{\binom{NM}{n}}
$$
and
$$
\var\,S_n^{(r)}=c_{2,n}-c_{1,n}^2+c_{1,n}=N(N-1)\frac{\binom{M}{r}^2\binom{(N-2)M}{n-2r}}{\binom{NM}{n}}
-N^2\frac{\binom{M}{r}^2\binom{(N-1)M}{n-r}^2}{\binom{NM}{n}^2}+N\frac{\binom{M}{r}\binom{(N-1)M}{n-r}}{\binom{NM}{n}}.
$$
In the asymptotics below we consider the situation when $n\to \infty$, $\frac{N}{n}\to\lambda\in(0,\infty]$, and $M=M(n)\ge n$.

Although the computational details are different,  asymptotic formulas for $\E\,S_n^{(r)}$, $\var\,S_n^{(r)}$, $\cov(S_n^{(0)},\,S_n^{(1)})$ and $\cov(S_n^{(0)},S_n^{(2)})$ are literally the same as for their counterparts  in the case of occupancy for distinguishable balls studied in Subsection~\ref{dibal}.

First we will consider the limit result in the case $r\ge 2$, $\lambda=\infty$, and $r\ge 0$, $\lambda\in(0,\infty)$.
\begin{thm}
Let $N/n\to\lambda\in(0,\infty]$ and $M=M(n)\ge n$. Let either
\newline {\em (a)} $r\ge 0$ and $\lambda<\infty$,  or \newline {\em (b)} $r\ge 2$, $\lambda=\infty$ and  $\frac{n^r}{N^{r-1}}\to\infty$.

Then
$$
\frac{S_n^{(r)}-\E\,S_n^{(r)}}{\sqrt{n^r/N^{r-1}}}\stackrel{d}{\to}{\cal
N}\left(0,\sigma_r^2\right)\;.
$$
\end{thm}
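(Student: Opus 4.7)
The plan is to apply Theorem~\ref{gen} via Remark~\ref{order}, mirroring the distinguishable balls argument (Theorem~\ref{indibal}). The task reduces to decomposing the factorial moment \eqref{ccee} into the form required by \eqref{ln} and verifying the polynomial bound \eqref{polbound}.

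First I would apply the identity $(a)_b=a^bT(b,a)$ to each of the five falling factorials in \eqref{ccee}. Using $((N-i)M)^{n-ri}=(NM)^{n-ri}(1-i/N)^{n-ri}$ together with the cancellation $N^iM^{ri}/(NM)^{ri}=N^{-i(r-1)}$, and inserting and compensating the factor $e^{in/N}$ as in the proof of Theorem~\ref{indibal}, the factorial moment becomes
\[
c_{i,n}=L_n^i\cdot e^{in/N}(1-i/N)^{n-ri}\cdot T(i,N)\,T(ri,n)\cdot\frac{T(n-ri,(N-i)M)}{T(n,NM)},
\]
with $L_n=n^rT(r,M)e^{-n/N}/(r!N^{r-1})$. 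The hypothesis $M\ge n$ ensures $T(r,M)\ge T(r,n)\to 1$, so $T(r,M)$ is bounded and bounded away from $0$; together with the boundedness of $e^{-n/N}$, this shows $L_n$ has order $n^r/N^{r-1}$, matching that of $\E\,S_n^{(r)}$ and $\var\,S_n^{(r)}$ noted in the paragraph preceding the theorem.

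The first three of the four non-$L_n^i$ factors --- $e^{in/N}(1-i/N)^{n-ri}$, $T(i,N)$, and $T(ri,n)$ --- contribute to the exponent $\sum_{j\ge1}Q_{j+1}^{(n)}(i)/(jn^j)$ exactly as in the proof of Theorem~\ref{indibal}, giving the terms $[r-jn/((j+1)N)](n/N)^ji^{j+1}$, $-(n/N)^j\Q_{j+1}(i)$, and $-\Q_{j+1}(ri)$, each a polynomial in $i$ of degree $j+1$ with coefficients uniformly bounded under our asymptotic assumption on $N/n$.

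The essentially new step --- and what I anticipate as the technical heart of the proof --- is to express the logarithm of the extra factor $T(n-ri,(N-i)M)/T(n,NM)$ in the same form. The plan is to apply \eqref{tmz} to both $T$ factors, obtaining $\sum_{j\ge1}[\Q_{j+1}(n)/(NM)^j-\Q_{j+1}(n-ri)/((N-i)M)^j]/j$, and then to recover the corresponding contributions to $Q_{j+1}^{(n)}(i)$ by Taylor-expanding this function of $i$ about $i=0$ and Laurent-expanding each $i^k$-coefficient in $1/n$. Using $((N-i)M)^{-j}=(NM)^{-j}(1-i/N)^{-j}$ together with the polynomial structure of $\Q_{j+1}(n-ri)$ in $i$, one checks that the coefficient of $i^k$ in the resulting series has leading $1/n$-power at least $k-1$, so this extra contribution to $Q_{j+1}^{(n)}$ remains a polynomial in $i$ of degree at most $j+1$. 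The hypothesis $M\ge n$ keeps the resulting coefficients uniformly bounded --- since $n/(NM)\le 1/N$ and $(n/M)^j\le 1$, each is controlled by constants depending only on $r$ and $\lambda$ --- yielding \eqref{polbound}. Once both \eqref{ln} and \eqref{polbound} are verified, Remark~\ref{order} delivers \eqref{gencon}.
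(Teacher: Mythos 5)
Your decomposition is algebraically correct and the overall strategy (reduce to Theorem~\ref{gen} via Remark~\ref{order}) is the same as the paper's, but the route through \eqref{ln} is genuinely different. The paper converts the ratio of binomial coefficients into falling factorials whose \emph{lengths} are multiples of $i$, writing $c_{i,n}=L_n^i\,T((M-r)i,NM-n)\,T(i,N)\,T(ri,n)/T(Mi,NM)$ with $L_n=Nn^r\binom{M}{r}(1-\tfrac{n}{NM})^M(NM-n)^{-r}$; then \eqref{tmz} instantly yields $Q_{j+1}(i)$ as a combination of $\Q_{j+1}$ evaluated at $Mi$, $(M-r)i$, $i$, $ri$, so the degree condition is automatic, and all the work goes into \eqref{polbound}: the terms $(\tfrac{n}{NM})^j\Q_{j+1}(Mi)$ and $(\tfrac{n}{NM-n})^j\Q_{j+1}((M-r)i)$ are each of size roughly $M\cdot(Ci)^{j+1}$ and must be paired so that their difference is $O((Ci)^{j+1})$, using $M\ge n$. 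You instead keep the lengths $n-ri$ and $n$, which produces the factor $T(n-ri,(N-i)M)/T(n,NM)$; its logarithm is not of the form $\sum_j P_{j+1}(i)/(jn^j)$ with polynomial $P_{j+1}$, because $((N-i)M)^{-j}$ brings in the full power series $(1-i/N)^{-j}$, so you must regroup a double series by powers of $i$ and reassign the coefficient of $i^p$ to the level $j'=p-1$ of \eqref{ln}. That regrouping does work: the coefficient of $i^p$ coming from the $j$th summand is of order $(n/N)^{j+p}\,n^{1-p}M^{-j}$ times combinatorial factors bounded by $C^{j+p}$, and summing over $j\ge1$ with $M\ge n$ gives a total of order $C^p n^{-p}$, so the reassigned coefficient $(p-1)n^{p-1}\cdot O(C^pn^{-p})$ is bounded by $C_1^p$ and \eqref{polbound} holds --- this is slightly better than the exponent $p-1$ you claim, and either suffices. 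In effect you trade the paper's explicit cancellation between two oversized polynomials for a cancellation that is built into the ratio (which equals $1$ at $i=0$) at the cost of a series rearrangement; your version makes the role of the hypothesis $M\ge n$ more transparent, while the paper's stays entirely within the \lq\lq $\Q_{j+1}$ at multiples of $i$\rq\rq\ calculus used in all its other examples. The one point to flesh out in a final write-up is precisely the rearrangement estimate above, which you correctly flag as the technical heart.
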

\begin{proof}
Rewrite the formula \eqref{ccee} as
$$
c_{i,n}=L_n^i\frac{T((M-r)i,NM-n)\,T(i,N)\,T(ri,n)}{T(Mi,NM)}\quad\mbox{with}\quad
L_n=Nn^r\binom{M}{r}\frac{\left(1-\frac{n}{NM}\right)^M}{(NM-n)^r}.
$$
Thus the representation \eqref{ln} holds with
$$
Q_{j+1}(i)=\left(\frac{n}{NM}\right)^j\Q_{j+1}(Mi)-\left(\frac{n}{NM-n}\right)^j\Q_{j+1}((M-r)i)-\left(\frac{n}{N}\right)^j\Q_{j+1}(i)-\Q_{j+1}(ri).
$$
We need to  see that the polynomials $Q_j$ satisfy bound \eqref{polbound}. This is clearly true for each of the last two terms in the above expression for $Q_{j+1}$.  For the first two terms we have
\begin{eqnarray*}&&
\left|\left(\frac{n}{NM}\right)^j\Q_{j+1}(Mi)-\left(\frac{n}{NM-n}\right)^j\Q_{j+1}((M-r)i)\right|\\&&\quad\quad
=\left|\left(\frac{n}{NM}\right)^j\sum_{k=1}^{Mi-1}\,k^j-\left(\frac{n}{NM-n}\right)^j\sum_{k=1}^{(M-r)i-1}\,k^j\right|\\&&\quad\quad\le
\left|\left(\frac{n}{NM}\right)^j-\left(\frac{n}{NM-n}\right)^j\right|\Q_{j+1}(Mi)+\left(\frac{n}{NM-n}\right)^j\sum_{k=(M-r)i}^{Mi-1}\,k^j.
\end{eqnarray*}
Since
$$
\left|\left(\frac{n}{NM}\right)^j-\left(\frac{n}{NM-n}\right)^j\right|\le \left(\frac{n}{NM-n}\right)^j\,\frac{jn}{NM}\le\left(\frac{2n}{NM-n}\right)^j\,\frac{n}{NM},
$$
and
$$
\sum_{k=(M-r)i}^{Mi-1}\,k^j\le rM^ji^{j+1},
$$
and $\Q_{j+1}(Mi)\le M^{j+1}\,i^{j+1}$, 
we conclude that the $Q_j$ do satisfy \eqref{polbound}.

Clearly, $\E\,S_n^{(r)}$, $\var\,S_n^{(r)}$ and $L_n$ are of order $n^r/N^{r-1}$, and again we conclude the proof by referring to Remark \ref{order}.
\end{proof}

Asymptotic normality for $S_n^{(1)}$ and $S_n^{(0)}$ for
$\lambda=\infty$ also holds with  an
 identical statement  to that of Theorem~\ref{thm:diballs_r01} for distinguishable balls.

\subsubsection{Rooted trees in random forests}\label{forest}
Let ${\cal T}(N,n)$ denote a forest with $N$ roots (that is,  $N$
rooted trees) and $n$ non-root vertices. Consider a uniform
distribution on the set of such ${\cal T}(N,n)$ forests. Let
$\xi_i=\xi_i^{(n)}$, $i=1,\ldots,N$, denote the number of non-root
vertices in the $i$th tree. Then (see,  e.g.,  \cite{cf}, \cite{p} or
\cite{p_bk}),  for any $k_i\ge 0$ such that  $\sum_{i=1}^N\,k_i= n$
$$
\P(\xi_1=k_1,\ldots,\xi_{N}=k_{N})=\frac{n!}{\prod_{i=1}^N\,k_i}\,
\frac{\prod_{i=1}^N\,(k_i+1)^{k_i-1}}{N(N+n)^{n-1}}.
$$
Note that this distribution is exchangeable and that it is a GAS with $\eta_i$ in \eqref{eq:gas} given by
\[\P(\eta_i=k)=\frac{\lambda^k(k+1)^{k-1}}{k!}e^{-(k+1)\lambda},\quad k=0,1\dots;\quad \lambda>0.\]
We mention in passing that the distribution of $\eta_i$  may be
identified as an Abel distribution discussed in \cite{lm} with (in their notation)  $p=1$ and $\theta=\ln\lambda-\lambda$. We refer to \cite[Example D]{lm}  for more information on Abel distributions, including further references.

For a fixed number $r\ge 0$ we are interested in  the number $S_n^{(r)}$ of trees with exactly $r$ non-root vertices:
$$
S_n^{(r)}=\sum_{i=1}^N\,I(\xi_i=r).
$$
Since the $i$th factorial moment of $S_n^{(r)}$ is of the form
$$
c_{i,n}=(N)_i\,\P(\xi_1=\ldots=\xi_i=r),
$$
we have to find the marginal distributions of the random vector $(\xi_1,\ldots,\xi_{N})$. From the identity
$$
s\sum_{k=0}^m\,\binom{m}{k}\,(k+1)^{k-1}(m-k+s)^{m-k-1}=(s+1)(m+1+s)^{m-1},
$$
which is valid for any natural $m$ and $s$, we easily obtain that, for $k_j\ge 0$ such that $\sum_{j=1}^{i+1}\,k_j=n$,
$$
\P(\xi_1=k_1,\ldots,\xi_i=k_i)=\frac{n!}{\prod_{j=1}^{i+1}\;k_j!}\,\frac{(N-i)(k_{i+1}+N-i)^{k_{i+1}-1}\,\prod_{j=1}^i\,(k_j+1)^{k_j-1}}{N(N+n)^{n-1}}.
$$
Therefore
\be\label{cec}
c_{i,n}=\frac{(r+1)^{i(r-1)}}{(r!)^i}\,\frac{N-i}{N}\,\frac{(N)_i(n)_{ri}}{(n+N-(r+1)i)^{ri}}\,\left(1-\frac{(r+1)i}{n+N}\right)^{n-1}.
\ee
Hence
$$
\E\,S_n^{(r)}=c_{1,n}=\frac{(r+1)^{r-1}}{r!}\,\frac{(N-1)(n)_r}{(n+N-r-1)^r}\,\left(1-\frac{r+1}{n+N}\right)^{n-1}.
$$
Thus, if $N/n\to\lambda\in(0,\infty]$ we have
$$
\frac{N^{r-1}}{n^r}\,\E\,S_n^{(r)}\to\frac{(r+1)^{r-1}}{r!}\,\left(\frac{\lambda}{\lambda+1}\right)^r\,e^{-\frac{r+1}{\lambda+1}}\;\;
\left(=\frac{(r+1)^{r-1}}{r!}\;\mbox{for}\;\lambda=\infty\right).
$$
Since $\var\,S_n^{(r)}=c_{2,n}-c_{1,n}^2+c_{1,n}$,  elementary but cumbersome computations lead to
$$
\frac{N^{r-1}}{n^r}\,\var\,S_n^{(r)}\to \sigma_r^2
$$$$=\frac{e^{-\frac{r+1}{\lambda+1}}}{(r+1)!}\,\left(\frac{\lambda (r+1)}{\lambda+1}\right)^r
\left[1-\frac{e^{-\frac{r+1}{\lambda+1}}}{(r+1)!}\,\left(\frac{r+1}{\lambda+1}\right)^r\right]-\lambda\left(\frac{e^{-\frac{r+1}{\lambda+1}}(\lambda r-1)}{(r+1)!(\lambda+1)}\right)^2\left(\frac{\lambda (r+1)^2}{(\lambda+1)^2}\right)^r
$$
for $r\ge 2$ and $\lambda\in(0,\infty]$ and for $r=0,1$ and $\lambda\in(0,\infty)$.
For $r=0,1$ and $\lambda=\infty$, and $n^2/N\to\infty$ we have
$$
\frac{N}{n^2}\,\var\,S_n^{(0)}\to \frac{3}{2}=\tilde{\sigma}_0^2\quad\mbox{and}\quad \frac{N}{n^2}\,\var\,S_n^{(1)}\to 6=\tilde{\sigma}_1^2.
$$
Similarly one can prove that
$$
\frac{N}{n^2}\,\cov(S_n^{(0)},S_n^{(1)})\to -3.
$$
Therefore, for the correlation coefficients we have
\be\label{ro01}
\rho(S_n^{(0)},S_n^{(1)})\to -1.
\ee
Since
$$
\frac{N}{n^2} \var\,S_n^{(2)}\to \frac{3}{2}=\sigma_2^{(2)}\quad\mbox{and}\quad \frac{N}{n^2}\,\cov(S_n^{(0)},S_n^{(2)})\to 1,
$$
we also have
\be\label{ro02}
\rho(S_n^{(0)},S_n^{(2)})\to 1.
\ee
\begin{thm}
Let $N/n\to\lambda\in(0,\infty]$. Let either \newline {\em (a)} $r\ge 0$ and $\lambda<\infty$, or \newline {\em (b)} $r\ge 2$, $\lambda=\infty$ and  $\frac{n^r}{N^{r-1}}\to\infty$.

Then
$$
\frac{S_n^{(r)}-\E\,S_n^{(r)}}{\sqrt{n^r/N^{r-1}}}\stackrel{d}{\to}{\cal
N}\left(0,\sigma_r^2\right)\;.
$$
\end{thm}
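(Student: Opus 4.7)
The plan is to apply Theorem~\ref{gen} in the same fashion as in the preceding CLTs of this section. Starting from \eqref{cec}, I would first rewrite $(n+N-(r+1)i)^{ri}=(n+N)^{ri}(1-(r+1)i/(n+N))^{ri}$ and use $(N)_i=N^iT(i,N)$ together with $(n)_{ri}=n^{ri}T(ri,n)$ to arrive at
$$
c_{i,n}=M_n^i\,\left(1-\frac{i}{N}\right)\,T(i,N)\,T(ri,n)\,\left(1-\frac{(r+1)i}{n+N}\right)^{n-1-ri},
$$
where $M_n=\frac{(r+1)^{r-1}Nn^r}{r!(n+N)^r}$ is of order $n^r/N^{r-1}$.

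Expanding the last factor directly via \eqref{expan} produces a linear-in-$i$ term $-(r+1)in/(n+N)$ of constant order, which does not fit the template $\sum_{j\ge 1}Q_{j+1}^{(n)}(i)/(jn^j)$. To absorb it, I would set
$$
L_n:=M_n\,e^{-(r+1)n/(n+N)},
$$
so that $L_n^i$ accounts for this problematic contribution and $L_n$ itself remains of order $n^r/N^{r-1}$. The residual exponent in $\log(c_{i,n}/L_n^i)$ then becomes
$$
\log\!\left(1-\frac{i}{N}\right)+\log T(i,N)+\log T(ri,n)+\frac{i(r+1)n}{n+N}+(n-1-ri)\log\!\left(1-\frac{(r+1)i}{n+N}\right),
$$
in which the cancellation of the troublesome piece is visible term by term.

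The bulk of the work is then to expand each summand using \eqref{tmz} and \eqref{expan}, convert $1/N^j$ and $1/(n+N)^j$ into $(n/N)^j/n^j$ and $(n/(n+N))^j/n^j$ respectively, and collect the contributions at each power $1/n^k$. Since $n/N$ and $n/(n+N)$ are bounded, and since $|\Q_{j+1}(i)|\le i^{j+1}$, an elementary estimate of each contribution yields the polynomial bound \eqref{polbound} with a constant $C=C(r)$ depending only on $r$. Because $\E\,S_n^{(r)}$, $\var\,S_n^{(r)}$, and $L_n$ are all of order $n^r/N^{r-1}$, which diverges under either hypothesis (a) or (b), Remark~\ref{order} provides \eqref{war1}, \eqref{war2}, and \eqref{ln0}, and Theorem~\ref{gen} yields the desired convergence. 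The principal technical difficulty is the bookkeeping around the factor $(1-(r+1)i/(n+N))^{n-1-ri}$: the variable exponent $n-1-ri$ couples with the logarithmic series expansion to create cross-terms of differing orders in $1/n$, and these must be carefully reshuffled so that every constant-order linear-in-$i$ piece is absorbed into $L_n$ while the remainder aggregates into polynomials of degree $\le j+1$ at the level $1/n^j$.
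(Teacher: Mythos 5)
Your decomposition is essentially the paper's own proof: the paper likewise rewrites \eqref{cec} as $c_{i,n}=L_n^i\,e^{i\frac{(n-1)(r+1)}{n+N}}\bigl(1-\frac{(r+1)i}{n+N}\bigr)^{n-1-ri}\,T(i+1,N)\,T(ri,n)$ with $L_n=\frac{N(r+1)^{r-1}}{r!}\bigl(\frac{n}{n+N}\bigr)^r e^{-\frac{(n-1)(r+1)}{n+N}}$, absorbing the constant-order linear-in-$i$ term into $L_n$ exactly as you do (your choice of $e^{-(r+1)n/(n+N)}$ instead of $e^{-(r+1)(n-1)/(n+N)}$ merely shifts a harmless $O(i/n)$ contribution into $Q_2$), and then invokes Remark~\ref{order} and Theorem~\ref{gen}. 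The reshuffling you identify as the main difficulty is resolved in the paper by the explicit formula $Q_{j+1}(i)=\bigl(r-\tfrac{j(r+1)(n-1)}{(j+1)(n+N)}\bigr)\bigl(\tfrac{(r+1)(n-1)}{n+N}\bigr)^j i^{j+1}-\bigl(\tfrac{n}{N}\bigr)^j\Q_{j+1}(i+1)-\Q_{j+1}(ri)$, which is precisely the bookkeeping you describe.
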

\begin{proof}
Since the asymptotics of $\var S_n^{(r)}$ and $\E\,S_n^{(r)}$ is of the same order as in Theorem \ref{dibal}, the conditions \eqref{war1} and \eqref{war2} are satisfied.
Using   \eqref{cec} we write
$$
c_{i,n}=L_n^i\,e^{i\frac{(n-1)(r+1)}{n+N}}\,\left(1-\frac{(r+1)i}{n+N}\right)^{n-1-ri}
\,T(i+1,N)\,T(ri,n),
$$
where
$$
L_n=\frac{N(r+1)^{r-1}}{r!}\left(\frac{n}{n+N}\right)^re^{-\frac{(n-1)(r+1)}{n+N}}.
$$
Thus the representation \eqref{ln} holds true with
$$
Q_{j+1}(i)=\left(r-\frac{j(r+1)(n-1)}{(j+1)(n+N)}\right)\,\left(\frac{(r+1)(n-1)}{n+N}\right)^j\,i^{j+1}-\left(\frac{n}{N}\right)^j\Q_{j+1}(i+1)-\Q_{j+1}(ri).
$$
The final result follows again by Remark \ref{order}, on noting that $\E\,S_n^{(r)}$, $\var\,S_n^{(r)}$ and $L_n$ are of order $n^r/N^{r-1}$.
\end{proof}

Again, as in previous cases we use \eqref{ro01} and \eqref{ro02} to
obtain the following result.
\begin{thm}
Let $N/n\to\infty$ and $n^2/N\to\infty$.
Then for $r=0,1$
$$
\sqrt{N}\,\frac{S_n^{(r)}-\E\,S_n^{(r)}}{n}\stackrel{d}{\to}{\cal
N}\left(0,\tilde{\sigma_r}^2\right)\;.
$$
\end{thm}

\subsection{Asymptotics in generalized inverse allocation schemes (GIAS)}\lbl{sec:gias}
Our final two settings are examples of  the gias as defined in
\eqref{eq:gias}. As in the case of GAS for  
\[S_n^{(r)}=\sum_{i=1}^N\,I(\xi_i^{(n)}=r),\] we will obtain asymptotic normality of
$$
\frac{S_n^{(r)}-\E\,S_n^{(r)}}{\sqrt{n}},
$$
when $N/n\to\lambda\in(0,\infty)$.

\subsubsection{Exchangeable negative multinomial
  model}\label{neg_mult} 
Let $(\xi_i)=(\xi_i^{(n)})$ be a random vector with a negative
multinomial distribution, that is, 
\be\label{enmm}
\P(\xi_1=k_1,\ldots,\xi_N=k_N)=\frac{(n+\sum_{j=1}^N\,k_j)!}{n!\prod_{j=1}^N\,k_j!}p^{\sum_{j=1}^N\,k_j}(1-Np)^{n+1}.
\ee
Note that this is an exchangeable case of a model  usually referred to as Bates--Neyman model,  introduced in \cite{bn}.  We refer to \cite[Chapter~36, Sec.~1--4]{jkb} for a detailed account of this distribution, its properties, applications, and further references. Here, we just note that this is a GIAS for which  \eqref{eq:gias} holds with $\eta_0\sim Poisson(\lambda_0)$,  $\eta_i\sim Poisson(\lambda_1)$, $i=1,\dots, N$, and $C=\tfrac{\lambda_0}{\lambda_0+N\lambda_1}$.  Thus \eqref{eq:gias} implies \eqref{enmm} with  $p=\frac{\lambda_1}{\lambda_0+N\lambda_1}$.

For a fixed integer $r$ we are interested in the asymptotics of
$$
S_n^{(r)}=\sum_{j=1}^N\,I(\xi_j=r).
$$
Denoting $\beta_n=(Np)^{-1}-1$ we obtain
\be\label{cc1}
c_{i,n}=(N)_i\frac{(n+ri)!}{n!(r!)^i}\,\frac{(N\beta_n)^{n+1}}{(N\beta_n+i)^{n+1+ri}}.
\ee

To study the asymptotic properties of $S_n^{(r)}$ we will assume that
$N/n\to\lambda\in(0,\infty)$. Moreover we let $p=p_n$  depend on $n$
in such a way that $Np_n\to \alpha\in(0,1)$, i.e.,  $\beta_n\to\alpha^{-1}-1$.
Consequently, setting $\Delta:=\frac{\alpha}{\lambda(1-\alpha)}$, for any $r=0,1,\ldots$
$$
\lim_{n\to\infty}\,\frac{1}{n}\,\E\,S_n^{(r)}=\frac{\lambda \Delta^r}{r!}\,e^{-\Delta}
$$
and
\be\label{s28}
\lim_{n\to\infty}\,\frac{1}{n}\,\var\,S_n^{(r)}=\frac{\lambda \Delta^r
 e^{-\Delta}}{r!}\left[1-\frac{\Delta^r
 e^{-\Delta}}{r!}(1-\lambda(r-\Delta)^2)\right]=:\sigma_r^2.
\ee

\begin{thm}
Let $N/n\to\lambda\in(0,\infty)$ and $Np_n\to \alpha\in(0,1)$.
Then for any $r=0,1,\ldots$
$$
\frac{S_n^{(r)}-\E\,S_n^{(r)}}{\sqrt{n}}\stackrel{d}{\to}{\cal
N}\left(0,\sigma_r^2\right),
$$
with $\sigma_r^2$ defined in \eqref{s28}.
\end{thm}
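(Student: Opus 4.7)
The plan is to apply Theorem~\ref{gen} to $S_n^{(r)}$, following the template of the GAS applications in Section~\ref{sec:appl} and concluding via Remark~\ref{orden}.

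Conditions \eqref{war1} and \eqref{war2} are immediate from the asymptotics established just before the theorem, since both $\E S_n^{(r)}$ and $\var S_n^{(r)}$ are of order $n$ with $\var S_n^{(r)}/n\to\sigma_r^2>0$.

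The substance is producing the decomposition \eqref{ln} from the explicit formula \eqref{cc1}. Setting $a_n=N\beta_n$, I would rewrite
$$
c_{i,n}=\frac{N^i}{(r!)^i}\,T(i,N)\cdot n^{ri}\prod_{k=1}^{ri}\!\Bigl(1+\frac{k}{n}\Bigr)\cdot a_n^{-ri}\,\Bigl(1+\frac{i}{a_n}\Bigr)^{-(n+1+ri)}\!,
$$
take logarithms, and expand each of the three transcendental factors as a double series in $1/N^j$, $1/n^j$, and $1/a_n^j$ via \eqref{tmz} and the Taylor series of $\log(1+x)$. Because $N\sim\lambda n$ and $a_n\sim n/\Delta$, rewriting $1/N^j=(n/N)^j/n^j$ and $1/a_n^j=(n/a_n)^j/n^j$ puts every series on the common scale $1/n^j$. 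The $i$-linear contributions then collect into $i\log L_n$ with
$$
L_n=\frac{N\, n^r}{r!\,a_n^r}\,\exp\!\Bigl(-\frac{n+1}{a_n}\Bigr)
$$
(up to bounded multiplicative corrections coming from the $i$-linear parts of $T(i,N)$ and of $\prod_{k=1}^{ri}(1+k/n)$), and a direct check gives $L_n/n\to\lambda\Delta^r e^{-\Delta}/r!$, matching the asymptotic mean.

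The remaining, non-linear pieces assemble into polynomials $Q_{j+1}^{(n)}(i)$ of degree at most $j+1$. The one subtle point is the factor $(n+1+ri)$ multiplying $\log(1+i/a_n)$: the $(n+1)$ part produces $i^k$ terms with coefficients of size $(n+1)/(k\,a_n^k)$, which must be recorded at level $j=k-1$ so the extra factor of $n$ is absorbed into a bounded coefficient of order $(n+1)/a_n\cdot(n/a_n)^{k-1}$; the $ri$ part yields $i^{k+1}$ terms that sit at level $j=k$ with coefficient $\pm r(n/a_n)^k$. For \eqref{polbound} I would then combine the elementary inequality $|\Q_{j+1}(M)|=\sum_{k=1}^{M-1}k^j\le M^{j+1}$ with the uniform boundedness of $n/N$, $n/a_n$, and $(n+1)/a_n$ to deduce $|Q_{j+1}^{(n)}(i)|\le (Ci)^{j+1}$ uniformly in $n$ and $i\ge 1$. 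Condition \eqref{ln0} then follows from Remark~\ref{orden}, Theorem~\ref{gen} yields $(S_n^{(r)}-\E S_n^{(r)})/\sqrt{\var S_n^{(r)}}\to\mathcal{N}(0,1)$, and $\var S_n^{(r)}/n\to\sigma_r^2$ converts this to the stated limit. The main obstacle is exactly the bookkeeping just described---locating each $i^k$ monomial from the $-(n+1+ri)\log(1+i/a_n)$ expansion at the correct $1/n^j$-level so that no factor of $n$ survives in the polynomial coefficients, and then verifying \eqref{polbound} uniformly in $n$ and $j$ after these rearrangements.
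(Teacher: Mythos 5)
Your proposal is correct and follows essentially the same route as the paper: the identical factorization of $c_{i,n}$ from \eqref{cc1} into $T$-factors and the $(1+i/(N\beta_n))^{-(n+1+ri)}$ term, the same identification of $L_n$ (yours differs only by the harmless factor $e^{-(n+1)/a_n}$ versus $e^{-n/a_n}$, which just relocates a linear-in-$i$ term between $L_n$ and the $Q$'s), the same level-by-level bookkeeping for the $-(n+1+ri)\log(1+i/a_n)$ expansion, and the same conclusion via \eqref{polbound} and Remark~\ref{orden}. The ``subtle point'' you flag is exactly the one the paper's explicit $Q_{j+1}$ handles with its $\bigl(r-\tfrac{j(n+1)}{(j+1)N\beta_n}\bigr)\bigl(-\tfrac{n}{N\beta_n}\bigr)^j i^{j+1}$ term.
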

\begin{proof}
Write \eqref{cc1} as
$$
c_{i,n}=L_n^i\,e^{i\tfrac{n}{N\beta_n}}\,\tfrac{T(ri+1,-n)\,T(i+1,N)}{\left(1+\tfrac{i}{N\beta_n}\right)^{n+1+ri}}\quad\mbox{with}\quad L_n=\tfrac{n^re^{-\frac{n}{N\beta_n}}}{r!N^{r-1}\beta_n^r}.
$$
Thus the representation \eqref{ln} holds true with
$$
Q_{j+1}(i)=-\tfrac{i}{N\beta_n}+\left(r-\tfrac{j(n+1)}{(j+1)N\beta_n}\right)\,\left(-\tfrac{n}{N\beta_n}\right)^j\,i^{j+1}+(-1)^{j+1}\Q_{j+1}(ri+1)-\left(\tfrac{n}{N}\right)^j\Q_{j+1}(i).
$$
Moreover, $\E\,S_n^{(r)}$, $\var\,S_n^{(r)}$  and $L_n$ are all of order $n$ and thus the final conclusion follows from Remark~\ref{orden}.
\end{proof}

\subsubsection{Dirichlet negative multinomial model}\label{dir}
Finally we consider an exchangeable version of what is known as \lq
Dirichlet model of buying behaviour\rq\  introduced in a seminal paper
by Goodhardt, Ehrenberg and Chatfield \cite{gec} and subsequently
studied in numerous papers up to the present. This distribution is
also mentioned in \cite[Chapter~36, Section~6]{jkb}. Writing, as usual, $\xi_i^{(n)}=\xi_i$, the distribution under consideration has the form
$$
\P(\xi_1=k_1,\ldots,\xi_N=k_N)=\frac{\left(n+\sum_{i=1}^N\,k_i\right)!}{n!\prod_{i=1}^N\,k_i!}\,\frac{\Gamma(Na+b)}{\Gamma^N(a)\Gamma(b)}\,\frac{\Gamma(n+1+b)\prod_{i=1}^N\,\Gamma(k_i+a)}{\Gamma\left( Na+b+n+1+\sum_{i=1}^N\,k_i\right)}
$$
for any $k_i=0,1,\ldots$, $i=1,\ldots,N$. Here  $n>0$ is an integer
and $a,\,b>0$ are parameters. This is again a GIAS, for which
\eqref{eq:gias} holds with $\eta_i\sim nb(a,p)$, $i=1,\dots,N$,
$\eta_0\sim nb(b+1,p)$, $0<p<1$, and $C=\tfrac{b}{Na+b}$. When $a$ and
$b$ are integers we recall a nice interpretation of
$(\xi_1,\ldots,\xi_N)$ via the P\'olya urn scheme.  An urn contains $b$ black balls and $a$ balls in each of $N$ non-black colors. In subsequent steps a ball is drawn at random and returned to the urn together with one ball of the same color. The experiment is continued until the $n$th black ball is drawn. Then $\xi_i$ is the number of balls of the $i$th color at the end of experiment, $i=1,\ldots,N$. This distribution can also be viewed  as multivariate negative hypergeometric law of the second kind.

From the fact that $c_{i,n}=(N)_i\P(\xi_1=\ldots=\xi_i=r)$ we get
\be\label{cD}
c_{i,n}=(N)_i\,\frac{(n+ri)!}{n!(r!)^i}\,\frac{\Gamma(ia+b)}{\Gamma^i(a)\Gamma(b)}\,\frac{\Gamma(n+1+b)\Gamma^i(r+a)}{\Gamma((r+a)i+n+1+b)}.
\ee
To study the asymptotic behavior of $S_n^{(r)}$ we will assume that $N/n\to\lambda\in(0,\infty)$ and that $b=b_n$ depends on $n$ in such a way that $b_n/n\to \beta>0$.

Below we use  the following product representation of Gamma function:
\be\label{gamfun}
\Gamma(x)=\frac{1}{xe^{\gamma x}}\,\prod_{k\ge 1}\,\frac{e^{\frac{x}{k}}}{1+\frac{x}{k}},
\ee
where $\gamma$ is the Euler constant and $x>0$. We also recall (see, e.g., \cite[Section~12.16]{ww}) that for a digamma function $\Psi(x)=\frac d{dx}\ln(\Gamma(x))$ we have
\[\Psi(x+1)=-\gamma+\sum_{k\ge 1}\,\left(\frac{1}{k}-\frac{1}{k+x}\right)\quad x\ne -1,-2, \dots.\]
Then, for any $0<x<y$ we can write
\be\label{Gs}
\frac{\Gamma(x+y)}{\Gamma(y)}=e^{x\Psi(y+1)}\,\frac{y}{x+y}\,e^{\sum_{k\ge 1}\,\left(\frac{x}{k+y}-\log\left(1+\frac{x}{k+y}\right)\right)}
\ee
and the series 
\[\sum_{k\ge
  1}\,\left(\frac{x}{k+y}-\log\left(1+\frac{x}{k+y}\right)\right)\] 
converges.
Note that,
\be\label{impl}
\Psi(y)-\ln y\to0,\quad \mbox{as}\quad y\to\infty,
\ee
so that, if $\alpha_n/n\to\alpha$ then for any $x>0$
$$
n^{-x}\frac{\Gamma(\alpha_n+x)}{\Gamma(\alpha_n)}\to \alpha^x.
$$
Consequently,
$$
\lim_{n\to\infty}\,\frac{1}{n}\,\E\,S_n^{(r)}=\frac{\lambda\Gamma(a+r)}{r!\Gamma(a)}\,\frac{\beta^a}{(1+\beta)^{a+r}}.
$$
Similarly,
\be\label{s29}
\lim_{n\to\infty}\,\frac{1}{n}\,\var\,S_n^{(r)}=\frac{\lambda\Gamma(r+a)\beta^a}{r!\Gamma(a)(1+\beta)^{a+r}}
\left(1-\frac{\Gamma(r+a)\beta^a\left[1+\lambda\left(\frac{(a+r)^2}{\beta+1}-\frac{a^2}{\beta}-r^2\right)\right]}{r!\Gamma(a)(1+\beta)^{a+r}}
\right)=:\sigma_r^2.
\ee

\begin{thm}
Let $N/n\to\lambda\in(0,\infty)$ and $b_n/n\to \beta\in(0,\infty)$.
Then for any $r=0,1,\ldots$
$$
\frac{S_n^{(r)}-\E\,S_n^{(r)}}{\sqrt{n}}\stackrel{d}{\to}{\cal
N}\left(0,\sigma_r^2\right),
$$
with $\sigma_r^2$ defined in \eqref{s29}.
\end{thm}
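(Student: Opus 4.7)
The plan is to apply Theorem~\ref{gen} in the form of Remark~\ref{orden}. The asymptotic computations that precede the statement already show that $\E\,S_n^{(r)}$ and $\var\,S_n^{(r)}$ are both of order $n$, so \eqref{war1} and \eqref{war2} are automatic. What remains is to find a decomposition of $c_{i,n}$ in \eqref{cD} of the form \eqref{ln} with $L_n$ of order $n$ and polynomials $Q_{j+1}^{(n)}$ of degree at most $j+1$ satisfying the uniform bound \eqref{polbound}; then \eqref{ln0} follows immediately from $N/n\to\lambda\in(0,\infty)$ and Remark~\ref{orden} delivers the conclusion.

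I would decompose \eqref{cD} into five building blocks: (i) $(N)_i=N^iT(i,N)$, to which \eqref{tmz} applies directly and contributes $-(n/N)^j\Q_{j+1}(i)$ to $Q_{j+1}^{(n)}(i)$; (ii) $(n+ri)!/n!=n^{ri}\prod_{k=1}^{ri}(1+k/n)$, whose logarithm expands via the standard series for $\log(1+x)$ into a sum of the form $\sum_{j\ge 1}(-1)^{j+1}\Q_{j+1}(ri+1)/(jn^j)$; (iii) the constant factor $(\Gamma(r+a)/(r!\,\Gamma(a)))^i$, which is absorbed into $L_n^i$; (iv) $\Gamma(ia+b)/\Gamma(b)$, which by \eqref{Gs} with $x=ia$, $y=b$ factors as $e^{ia\Psi(b+1)}\cdot b/(ia+b)\cdot\exp\bigl(\sum_{k\ge 1}(ia/(k+b)-\log(1+ia/(k+b)))\bigr)$; and (v) the reciprocal ratio $\Gamma(n+1+b)/\Gamma((r+a)i+n+1+b)$, treated analogously via \eqref{Gs} with $x=(r+a)i$, $y=n+1+b$. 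The two factors $b/(ia+b)=1/(1+ia/b)$ and $(n+1+b)/((r+a)i+n+1+b)$ can each be rewritten in the exponential form using \eqref{expan}.

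The linear-in-$i$ terms from the exponents $e^{ia\Psi(b+1)}$ and $e^{-(r+a)i\Psi(n+b+2)}$ should be pulled into $L_n^i$; combining them with the constants from (iii) and with the factor $N^i n^{ri}$ gives
$$
L_n=N\,\tfrac{\Gamma(r+a)}{r!\,\Gamma(a)}\,n^r\,e^{a\Psi(b+1)-(r+a)\Psi(n+b+2)}.
$$
Using $b_n/n\to\beta$, $N/n\to\lambda$ together with the digamma asymptotic \eqref{impl}, one readily checks that $L_n/n$ converges to $\lambda\Gamma(r+a)\beta^a/(r!\,\Gamma(a)(1+\beta)^{a+r})$, so $L_n$ is indeed of order $n$, matching the leading asymptotic of $\E\,S_n^{(r)}=c_{1,n}$. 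What is left over, namely the $T(i,N)$ factor, the series from the factorial ratio, the series in (iv)-(v) from \eqref{Gs}, and the exponential reformulation of the two rational factors, should assemble into $\exp\bigl(\sum_{j\ge 1}Q_{j+1}^{(n)}(i)/(jn^j)\bigr)$ once one rearranges by powers of $1/n$.

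The main obstacle will be verifying the polynomial form and the uniform bound \eqref{polbound} for the contributions coming from the two infinite series in \eqref{Gs}. For each such series, expanding $u-\log(1+u)=\sum_{j\ge 2}(-u)^j/j$ with $u=ia/(k+b)$ or $u=(r+a)i/(k+n+1+b)$ and swapping the order of summation produces sums of the form $\sum_{k\ge 1}(k+y)^{-j}$ which, up to multiplication by a constant and scaling by $y\sim \text{const}\cdot n$, become coefficients of a polynomial in $i$ of degree exactly $j$, divided by $n^{j-1}$. Collecting like powers of $1/n^j$ across all five pieces and using elementary tail bounds for these polygamma-type series should produce polynomials $Q_{j+1}^{(n)}(i)$ of degree at most $j+1$ whose coefficients are bounded independently of $n$, yielding $|Q_{j+1}^{(n)}(i)|\le(Ci)^{j+1}$ for a constant $C$ that depends only on $a$, $b/n$ and $N/n$ (hence uniform in $n$ for large $n$). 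Once this is in place, Remark~\ref{orden} finishes the proof.
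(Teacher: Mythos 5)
Your proposal follows essentially the same route as the paper: the same five-factor decomposition of \eqref{cD}, the same use of \eqref{Gs} with $(x,y)=(ia,b_n)$ and $(x,y)=(i(r+a),n+1+b_n)$, the identical $L_n=\tfrac{N\Gamma(r+a)n^r}{r!\Gamma(a)}e^{a\Psi(b_n+1)-(r+a)\Psi(n+b_n+2)}$, and the same verification of \eqref{polbound} via uniform bounds $n^jh_j(\alpha_n)\le c^j$ on the polygamma-type series $h_j(y)=\sum_{k\ge1}(k+y)^{-(j+1)}$, concluding by Remark~\ref{orden}. The only cosmetic difference is that you treat the rational factor $y/(x+y)$ from \eqref{Gs} separately via \eqref{expan}, whereas the paper folds it into the exponent as the $(-x)^j/(jy^j)$ terms; these are the same computation.
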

\begin{proof}
Note that \eqref{cD} can be written as
$$
c_{i,n}=\left(\frac{N\Gamma(r+a)n^r}{r!\Gamma(a)}\right)^i\,T(ir+1,-n)\,T(i,N)\,\frac{\Gamma(b_n+ia)}{\Gamma(b_n)}
\,\frac{\Gamma(n+1+b_n)}{\Gamma(n+1+b_n+i(r+a))}.
$$
Moreover, setting $$h_j(x)=\sum_{k\ge 1}\,\frac{1}{(k+x)^{j+1}},\quad x>0,\quad j\ge2.
$$
we see that
\eqref{Gs} can be developed into
$$
\frac{\Gamma(x+y)}{\Gamma(y)}=e^{x\Psi(y+1)}\,e^{\sum_{j\ge 1}\left(\frac{(-x)^j}{jy^j}+\frac{(-x)^{j+1}}{j+1}h_j(y)\right)}.
$$
Therefore, taking $(x,y)=(ia,b_n)$ and $(x,y)=(i(r+a),n+1+b_n)$, we
decompose $c_{i,n}$ according to \eqref{ln},  where
$$
L_n=\frac{N\Gamma(r+a)n^r}{r!\Gamma(a)}\,e^{a\Psi(1+b_n)-(r+a)\Psi(2+n+b_n)}
$$
and
$$
Q_{j+1}(i)=\left[\left(\frac{n(r+a)}{b_n+n+1}\right)^j-\left(\frac{na}{b_n}\right)^j\right](-1)^{j+1}i^j$$$$+\frac{j(-n)^j}{j+1}\left[(r+a)^{j+1}h_j(b_n+n+1)-a^{j+1}h_j(b_n)\right]i^{j+1}+(-1)^{j+1}\Q_{j+1}(ir+1)-\left(\frac{n}{N}\right)^j\Q_{j+1}(i).
$$
On noting that  $\alpha_n/n\to\alpha$ implies that
$
n^jh_j(\alpha_n)<c^j(\alpha)$ uniformly in $n
$, 
we conclude that polynomials $Q_j$ satisfy condition \eqref{polbound}.
Moreover,
\eqref{impl} yields that $L_n$ is of order $n$. Since $\E\,S_n^{(r)}$ and $\var\,S_n^{(r)}$ are of order $n$ too, the result  follows by Remark \ref{orden}.
\end{proof}

\vspace{2mm}

{\bf Acknowledgements}

Part of the work of KB and JW was carried while they
    were visiting University of Seville in March/April 2009. 
 Part of the  work of PH was
carried out while he was at the Institute of Mathematics of the Polish
Academy of Sciences and the  Warsaw University of Technology in the
autumn of 2010. 
 Part of the work of GR was carried when he was visiting Warsaw University of Technology in July 2009. Part of work of JW was carried out while he was visiting Medical College of Georgia in January/February 2009.

\end{document}